\documentclass[12pt, reqno]{amsart}
\makeatletter
\@namedef{subjclassname@1991}{$\mathrm{1991}$ Mathematics Subject Classification}
\@namedef{subjclassname@2000}{$\mathrm{2000}$ Mathematics Subject Classification}
\@namedef{subjclassname@2010}{$\mathrm{2010}$ Mathematics Subject Classification}
\@namedef{subjclassname@2020}{$\mathrm{2020}$ Mathematics Subject Classification}
\makeatother
\usepackage{amsmath,amsthm, amscd, amsfonts, amssymb, graphicx, color}
\usepackage[bookmarksnumbered, colorlinks, plainpages,linkcolor=blue,urlcolor=blue,citecolor=blue]{hyperref}
\textwidth 14 cm \textheight 19 cm
\oddsidemargin 1.5cm \evensidemargin 1.5cm
\setcounter{page}{1}

%------------------------------------------------------------------------------------%

\newtheorem{thm}{Theorem}[section]
\newtheorem{cor}[thm]{Corollary}
\newtheorem{lem}[thm]{Lemma}

\newtheorem{defn}[thm]{Definition}

\numberwithin{equation}{section}

%------------------------------------------------------------------------------------%
\begin{document}

\title{On generalized core-EP invertibility in a Banach algebra}

\author{Huanyin Chen}
\author{Marjan Sheibani$^*$}
\address{School of Big Data, Fuzhou University of International Studies and Trade, Fuzhou 350202, China}
\email{<huanyinchenfz@163.com>}
\address{Farzanegan Campus, Semnan University, Semnan, Iran}
\email{<m.sheibani@semnan.ac.ir>}

\subjclass[2020]{16U90, 15A09, 46H05.} \keywords{generalized core-EP inverse; generalized Drazin inverse; reverse order; generalized core-EP
order, complex matrix.}

\begin{abstract} We present new properties of generalized core-EP inverse in a Banach *-algebra. We characterize this new generalized inverse by using involved annihilators. The generalized core-EP inverse for products is obtained. The core-EP orders for Banach *-algebra elements are thereby investigated. As applications, new properties of the core-EP inverse for block complex matrices are given.
\end{abstract}

\maketitle

\section{Introduction}

A Banach algebra is called a Banach *-algebra if there exists an involution $*: x\to x^*$ satisfying $(x+y)^*=x^*+y^*, (\lambda x)^*=\overline{\lambda} x^*, (xy)^*=y^*x^*, (x^*)^*=x$. Let $\mathcal{A}$ be a Banach *-algebra. An element $a\in \mathcal{A}$ has core-EP inverse (i.e., pseudo core inverse) if there exist $x\in \mathcal{A}$ and $k\in \Bbb{N}$ such that $$xa^{k+1}=a^k, ax^2=x, (ax)^*=ax.$$ If such $x$
exists, it is unique, and denote it by $a^{\tiny\textcircled{D}}$. The core-EP invertibility in a Banach *-algebra is attractive. This notion was
introduced by Gao and Chen in 2018 (see~\cite{{GC}}). This is a natural extension of the core inverse which is the first studied by Baksalary and Trenkler
for a complex matrix in 2010 (see~\cite{BT}). Rakic et al. (see~\cite{R}) generalized the core inverse of a complex matrix to the case of an element in a
ring. An element $a$ in a Banach *-algebra $\mathcal{A}$ has core inverse if and only if there exist $x\in \mathcal{A}$ such that $$a=axa,
x\mathcal{A}=a\mathcal{A}, \mathcal{A}x=\mathcal{A}a^*.$$ If such $x$ exists, it is unique, and denote it by $a^{\tiny\textcircled{\#}}$. Recently, many
authors have studied core and core-EP inverses from many different views, e.g., ~\cite{CZ2,GC2,GC3,K,M,R,XS,XS1,Z2}.

Recall that $a\in \mathcal{A}$ has g-Drazin inverse (i.e., generalized Drazin inverse) if there exists $x\in \mathcal{A}$ such that
$ax^2=x, ax=xa, a-a^2x\in \mathcal{A}^{qnil}.$ Such $x$ is unique, if exists, and denote it by $a^d$. Here, $\mathcal{A}^{qnil}=\{a\in \mathcal{A}~\mid~1+\lambda a\in \mathcal{A}^{-1}\}.$ As is well known, we have
$a\in \mathcal{A}^{qnil}\Leftrightarrow \lim\limits_{n\to \infty}\parallel a^n\parallel^{\frac{1}{n}}=0.$ We use $\mathcal{A}^d$ to stand for the set of all generalized Drazin invertible element $a$ in $\mathcal{A}$.

Let $\mathcal{B}(X)$ be the algebra of bounded linear operators over a Hilbert space $X$. In~\cite{MD3}, Mosi\'c and Djordjevi\'c introduced and studied core-EP
inverse for a operator $T\in \mathcal{B}(X)^d$. This new generalized inverse was extensively investigated in ~\cite{MD,MD3,MD4}. Recently, Mosi\'c extended core-EP inverse of bounded linear operators on Hilbert spaces to elements of a $C^*$-algebra by means of range projections (see~\cite{MDD}). 

These research mentioned above raise our unifying them and introduce a new generalized inverse.

\begin{defn} An element $a\in \mathcal{A}$ has generalized core-EP inverse if there exists $x\in \mathcal{A}$ such that $$x=ax^2, (ax)^*=ax, \lim_{n\to \infty}||a^n-xa^{n+1}||^{\frac{1}{n}}=0.$$ \end{defn}

From authors' recent works on weighted generalized core inverse (see~\cite{CM1}), the preceding $x$ is unique if it exists, and denoted by $a^{\tiny\textcircled{d}}$. We use $\mathcal{A}^{\tiny\textcircled{d}}$ to stand for the set of all generalized core-EP invertible element $a$ in $\mathcal{A}$. Here we list main results about generalized core-EP inverse by choosing the weight $e=1$ in~\cite{CM1,CM2}. 

\begin{thm} Let $\mathcal{A}$ be a Banach *-algebra, and let $a\in \mathcal{A}$. Then the following are equivalent:\end{thm}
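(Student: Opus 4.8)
The plan is to prove the stated equivalences as a single cycle of implications, using the generalized Drazin inverse $a^d$ together with the Hermitian idempotent $p:=aa^{\tiny\textcircled{d}}$ as the bridge that links the analytic defining conditions to the algebraic (annihilator and range) characterizations. The well-definedness of $a^{\tiny\textcircled{d}}$ is already guaranteed by~\cite{CM1} with the weight $e=1$, so the entire task is to show that each listed condition forces, and is forced by, the same underlying Pierce decomposition of $a$.

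First I would start from $a\in\mathcal{A}^{\tiny\textcircled{d}}$ with witness $x=a^{\tiny\textcircled{d}}$ and read off the structural consequences of the three defining relations. From $ax^2=x$ one gets $(ax)x=x$, and combined with $(ax)^*=ax$ this shows that $p:=ax$ is a self-adjoint idempotent, i.e. a projection, with $px=x$. The remaining relation $\lim_{n\to\infty}\|a^n-xa^{n+1}\|^{1/n}=0$ is precisely the statement that $(1-xa)a^n$ decays quasinilpotently; writing $q:=xa$ I would deduce from this that the complementary corner $a-a^2x$ (the $(1-p)$-part of $a$) lies in $\mathcal{A}^{qnil}$. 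This is the step that produces the generalized Drazin inverse: the projection $p$ and the quasinilpotent remainder give a decomposition of $a$ in which the $p$-corner is invertible and the complementary corner is quasinilpotent, so $a\in\mathcal{A}^d$ and the spectral idempotent $1-aa^d$ becomes available as the companion projection.

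Next I would translate this decomposition into the annihilator conditions promised in the abstract. Because $p=ax$ is a projection whose range reproduces the range of the relevant power of $a$, the range-and-adjoint data are equivalent to equalities of left and right annihilators of suitable powers of $a$ and of $ax$: concretely, the condition $(ax)^*=ax$ together with the range identity for $a^{\tiny\textcircled{d}}$ is encoded by an equality of the form $\mathrm{r}(a^{\tiny\textcircled{d}})=\mathrm{r}((a^k)^{*})$, with a matching left-annihilator identity, where $\mathrm{r}(\cdot)$ denotes the right annihilator. Each reformulation is the standard fact that a projection is determined by its range and kernel, so I would carry out the verification directly at the level of annihilators in both directions, rather than passing through ranges, which keeps the argument valid without assuming the existence of range projections.

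For the converse I would reconstruct the witness explicitly: given $a\in\mathcal{A}^d$ together with the projection datum $p$, I set
\[
x=a^{d}ap,
\]
a formula of this shape (equivalently built from a $\{1,3\}$-inverse of an appropriate power of $a$), and then verify directly that it satisfies $x=ax^2$, $(ax)^*=ax$, and the limit condition, after which the uniqueness in~\cite{CM1} identifies it as $a^{\tiny\textcircled{d}}$. The main obstacle I anticipate is analytic rather than algebraic: showing that $\lim_{n\to\infty}\|a^n-xa^{n+1}\|^{1/n}=0$ is genuinely equivalent to quasinilpotency of the complementary corner, while simultaneously guaranteeing that the required Hermitian projection actually exists in a general Banach *-algebra, where—unlike in a $C^{*}$-algebra—range projections are not automatic. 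Confirming that the reconstructed $x$ meets all three defining conditions at once, and that the annihilator equalities faithfully capture both the range and the self-adjointness requirements, will be the delicate part of the argument.
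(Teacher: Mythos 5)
The first thing to say is that the paper contains no proof of this theorem: it is explicitly presented as a list of results imported from~\cite{CM1,CM2} by specializing the weight to $e=1$ (``Here we list main results \dots''), so there is no internal argument against which your proposal can be matched. Your outline --- extract the Hermitian idempotent $p=ax$, show $a$ is block upper triangular relative to $p$ with an invertible $(1,1)$-corner and a quasinilpotent $(2,2)$-corner, then reconstruct the witness from $a^d$ together with a projection --- is the standard route, and it is consistent with the machinery the paper does develop later (Lemmas 4.1--4.2, Theorem 2.3). As a plan it is reasonable.

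As a proof, however, it has genuine gaps. First, you claim $p=ax$ is an idempotent ``from $ax^2=x$ combined with $(ax)^*=ax$''; it is not, since $(ax)^2=a(xax)$ and $xax=x$ is not among the defining relations. It must be extracted from the limit condition, e.g.\ via $x=a^nx^{n+1}$ and $\|x-xax\|^{\frac{1}{n}}\leq \|a^n-xa^{n+1}\|^{\frac{1}{n}}\|x\|^{\frac{n+1}{n}}\to 0$, and the same device is needed to prove $(1-p)ap=0$ before one may speak of ``the $(1-p)$-part of $a$'' being quasinilpotent. Second, the reconstruction formula $x=a^dap$ is wrong: if $q$ is the projection with $a^d\mathcal{A}=q\mathcal{A}$, then $a^daq=(a^daa^d)z=a^dz=q$, a projection rather than the inverse (for $a$ invertible it returns $1$ instead of $a^{-1}$); the correct witness is $a^dq$, equivalently $(a^d)^2(a^d)^{(1,3)}$. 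Third, your annihilator reformulation $r(a^{\tiny\textcircled{d}})=r((a^k)^*)$ invokes a power $k$ that does not exist in the generalized (non-polar) setting, and the genuinely hard content of items (5)--(8) --- that $a\in\mathcal{A}^d$ plus a range/projection/$\{1,3\}$-inverse hypothesis actually produces the required Hermitian projection, which you rightly note is not automatic outside $C^*$-algebras --- is acknowledged as ``the delicate part'' but never carried out. So the proposal is an outline whose decisive steps remain open, with one formula that is concretely incorrect.
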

\begin{enumerate}
\item [(1)] $a\in \mathcal{A}^{\tiny\textcircled{d}}$.
\item [(2)] There exist $x,y\in \mathcal{A}$ such that $$a=x+y, x^*y=yx=0, x\in
\mathcal{A}^{\tiny\textcircled{\#}}, y\in \mathcal{A}^{qnil}.$$
\item [(3)] There exists a projection $p\in \mathcal{A}$ (i.e., $p=p^2=p^*$)
 such that $$a+p\in \mathcal{A}^{-1}, pa=pap\in \mathcal{A}^{qnil}.$$
\item [(4)] $xax=x, im(x)=im(x^*)=im(a^d)$.
\vspace{-.5mm}
\item [(5)] $a\in \mathcal{A}^d$ and there exists a projection $q\in \mathcal{A}$ such that $a^d\mathcal{A}=q\mathcal{A}$. In this case, $a^{\tiny\textcircled{d}}=a^dq.$
\item [(6)] $a\in \mathcal{A}^d$ and $a^d\in \mathcal{A}^{(1,3)}$. In this case, $a^{\tiny\textcircled{d}}=(a^d)^2(a^d)^{(1,3)}.$
\item [(7)] $a\in \mathcal{A}^d$ and $a^d\in \mathcal{A}^{\tiny\textcircled{\#}}$. In this case, $a^{\tiny\textcircled{d}}=(a^d)^2(a^d)^{\tiny\textcircled{\#}}.$
\item [(8)] $a\in \mathcal{A}^d$ and $aa^d\in \mathcal{A}^{(1,3)}$. In this case, $a^{\tiny\textcircled{d}}=a^d(aa^d)^{(1,3)}.$
\end{enumerate}

In Section 2, we characterize generalized core-EP inverse for an element in a Banach *-algebra by using involved annihilators. We prove that 
$a^{\tiny\textcircled{d}}=x$ if and only if $xax=x, \ell(x)=\ell(x^*)=\ell(a^d)$ if and only if there exists a projection $q\in \mathcal{A}$ such that $\ell{(a^d)}=\ell{(q)}.$ 

In Section 3, we establish the reverse order law for generalized core-EP inverses. Let $a,b\in \mathcal{A}^{\tiny\textcircled{d}}$. If $ab^2=b^2a=bab$ and $a^*b^2=b^2a^*=ba^*b$, we prove that $ab\in \mathcal{A}^{\tiny\textcircled{d}}$ and $(ab)^{\tiny\textcircled{d}}=a^{\tiny\textcircled{d}}b^{\tiny\textcircled{d}}=b^{\tiny\textcircled{d}}a^{\tiny\textcircled{d}}.$ 

Finally, in Section 4, the generalized core EP-order for
Banach *-algebra elements are introduced. Let $a,b\in \mathcal{A}^{\tiny\textcircled{d}}$. We define $a\leq^{{\tiny\textcircled{d}}}b$ if
$aa^{{\tiny\textcircled{d}}}=ba^{{\tiny\textcircled{d}}}$ and
$a^{{\tiny\textcircled{d}}}a=a^{{\tiny\textcircled{d}}}b$. The characterizations of the generalized core EP-order are
present. As applications, new properties of the core-EP order for block complex matrices are obtained.

Throughout the paper, all Banach *-algebras are complex with an identity. Let ${\Bbb C}^{n\times n}$ be the *-Banach algebra of 
all $n\times n$ complex matrices with conjugate transpose $*$. $\mathcal{A}^{-1}, \mathcal{A}^{D},  \mathcal{A}^{\tiny\textcircled{\#}}$ and $\mathcal{A}^{\tiny\textcircled{D}}$ denote the sets of
all invertible, Drazin invertible, core invertible and core-EP invertible in $\mathcal{A}$, respectively. Let $a\in \mathcal{A}^d$. We
use $a^{\pi}$ to stand for the spectral idempotent of $a$ corresponding to $\{ 0\}$, i.e., $a^{\pi}=1-aa^d$. ${\ell}(a)$ and $r(a)$ denote the left and right annihilators of $a$ respectively. 

\section{characterizations by using involved annihilators}

The aim of this section is to characterize generalized core-EP inverse of a Banach *-algebra element by using annihilators. We now derive

\begin{thm} Let $a\in \mathcal{A}^d$. Then\end{thm}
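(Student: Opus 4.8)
The plan is to reduce every annihilator condition to the corresponding range (right-ideal) condition already handled in Theorem 1.2, letting the involution enter only through the identity $x^*a^*x^*=(xax)^*$. The whole argument rests on two elementary observations. First, for arbitrary $b,c\in\mathcal{A}$ one always has $b\mathcal{A}=c\mathcal{A}\Rightarrow \ell(b)=\ell(c)$: writing $b=cu$ and $c=bv$ gives $yb=0\Rightarrow yc=ybv=0$, and conversely. Second, and crucially, for two idempotents $g,h\in\mathcal{A}$ the converse also holds, so that $\ell(g)=\ell(h)\Leftrightarrow g\mathcal{A}=h\mathcal{A}$; indeed $1-g\in\ell(g)=\ell(h)$ forces $h=gh\in g\mathcal{A}$, and symmetrically $g=hg\in h\mathcal{A}$. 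The point I would emphasize is that annihilator equality is strictly weaker than range equality for general elements, but the two coincide once everything is routed through idempotents.

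Next I would install the dictionary that turns $x$, $x^*$ and $a^d$ into idempotents. Assuming $xax=x$, the elements $xa$ and $ax$ are idempotent, and from $x=(xa)x$ one obtains both $\ell(x)=\ell(xa)$ and $x\mathcal{A}=(xa)\mathcal{A}$ (for the inclusion $\ell(xa)\subseteq\ell(x)$ use $yxa=0\Rightarrow yx=y(xax)=(yxa)x=0$). Applying the same reduction to $x^*$, which satisfies $x^*a^*x^*=x^*$, replaces $x^*$ by the idempotent $(ax)^*=x^*a^*$, giving $\ell(x^*)=\ell((ax)^*)$ and $x^*\mathcal{A}=(ax)^*\mathcal{A}$. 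Finally, since $a^d=(aa^d)a^d$ with $aa^d$ idempotent, the first observation yields $\ell(a^d)=\ell(aa^d)$ and $a^d\mathcal{A}=aa^d\mathcal{A}$.

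With this dictionary in place the equivalences follow mechanically. The condition $xax=x,\ \ell(x)=\ell(x^*)=\ell(a^d)$ becomes $\ell(xa)=\ell((ax)^*)=\ell(aa^d)$, which by the idempotent lemma is equivalent to $(xa)\mathcal{A}=(ax)^*\mathcal{A}=(aa^d)\mathcal{A}$, that is, to $im(x)=im(x^*)=im(a^d)$ together with $xax=x$. By the equivalence of (1) and (4) in Theorem 1.2 this holds exactly when $a\in\mathcal{A}^{\tiny\textcircled{d}}$ and $x=a^{\tiny\textcircled{d}}$. For the projection characterization, $\ell(a^d)=\ell(q)$ with $q$ a projection reads $\ell(aa^d)=\ell(q)$; the idempotent lemma converts this into $aa^d\mathcal{A}=q\mathcal{A}$, hence $a^d\mathcal{A}=q\mathcal{A}$, which is precisely condition (5) of Theorem 1.2, yielding $a\in\mathcal{A}^{\tiny\textcircled{d}}$ and the formula $a^{\tiny\textcircled{d}}=a^dq$.

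The delicate point, and the step I would treat most carefully, is the idempotent lemma together with the reductions that feed it: the theorem genuinely fails if one tries to pass directly between $\ell(\,\cdot\,)$ and $im(\,\cdot\,)$ for the non-idempotent elements $x$ and $a^d$, since only one implication is available in general. It is the hypothesis $xax=x$ (and its adjoint) that makes $xa$, $(ax)^*$ and $aa^d$ idempotent and thereby closes the gap. I would therefore state the two annihilator lemmas separately and verify each reduction identity explicitly before invoking Theorem 1.2, so that the conversion from annihilators to ranges is airtight.
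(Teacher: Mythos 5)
Your reduction machinery is sound: the two annihilator lemmas are correct (the one-directional implication $b\mathcal{A}=c\mathcal{A}\Rightarrow\ell(b)=\ell(c)$ for arbitrary elements, and the full equivalence $\ell(g)=\ell(h)\Leftrightarrow g\mathcal{A}=h\mathcal{A}$ for idempotents), and the dictionary $\ell(x)=\ell(xa)$, $x\mathcal{A}=(xa)\mathcal{A}$, $\ell(x^*)=\ell((ax)^*)$, $x^*\mathcal{A}=(ax)^*\mathcal{A}$, $\ell(a^d)=\ell(aa^d)$, $a^d\mathcal{A}=(aa^d)\mathcal{A}$ is verified correctly from $xax=x$ and the functional equation of $a^d$. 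This gives a clean proof that condition (3) is equivalent to condition (1), by converting $\ell(x)=\ell(x^*)=\ell(a^d)$ into $im(x)=im(x^*)=im(a^d)$ and citing Theorem 1.2(4). It is a genuinely different route from the paper's: the paper proves $(3)\Rightarrow(1)$ from scratch, estimating $\|a^n-xa^{n+1}\|^{1/n}$ via the identity $a^n-xa^{n+1}=(a^n-a^da^{n+1})-x(a^n-a^da^{n+1})a$ and deriving $(ax)^*=ax$ and $ax^2=x$ directly from the annihilator inclusions, whereas you outsource all of that to Theorem 1.2(4). Your route is shorter and isolates exactly where the hypothesis $xax=x$ is used (to manufacture idempotents); the paper's route is self-contained and does not presuppose the range-based characterization. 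Your treatment of the projection statement $\ell(a^d)=\ell(q)$ is likewise a slicker proof than the paper's, though that statement belongs to Theorem 2.3, not to the theorem under review.

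The genuine gap is that the theorem asserts the equivalence of \emph{three} conditions, and your argument never touches condition (2): $xax=x$, $(ax)^*=ax$, $(xa-1)a^d=0$ and $im(x)\subseteq im(a^d)$. The paper's proof runs the cycle $(1)\Rightarrow(2)\Rightarrow(3)\Rightarrow(1)$, and the passage through (2) is where the limit condition $\lim_n\|a^n-xa^{n+1}\|^{1/n}=0$ is converted into the algebraic identity $aa^d=xa^2a^d$, i.e.\ $(xa-1)a^d=0$. Your machinery would in fact handle (2) without much extra work --- from $(xa-1)a^d=0$ one gets $a^d=xaa^d\in x\mathcal{A}$, which together with $im(x)\subseteq im(a^d)$ yields $im(x)=im(a^d)$, and $(ax)^*=ax$ with $xax=x$ gives $x^*=(ax)x^*\in (aa^d)\mathcal{A}=a^d\mathcal{A}$, with the reverse inclusion coming from $a^d=(ax)^*aa^d=x^*a^*aa^d$ --- but as written the proposal proves only a two-way equivalence and so does not establish the stated theorem. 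You should either add the verification that (2) is equivalent to the other two conditions or note explicitly that you are omitting it.
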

\begin{enumerate}
\item [(1)] $a\in \mathcal{A}^{\tiny\textcircled{d}}$ and $a^{\tiny\textcircled{d}}=x$.
\vspace{-.5mm}
\item [(2)] $xax=x, (ax)^*=ax, (xa-1)a^d=0$ and $im(x)\subseteq im(a^d)$.
\vspace{-.5mm}
\item [(3)] $xax=x, \ell(x)=\ell(x^*)=\ell(a^d)$.
\end{enumerate}
\begin{proof} $(1)\Rightarrow (2)$ Let $a^{\tiny\textcircled{d}}=x$. Then $(ax)^*=ax$. In view of Theorem 1.2, we have $xax=x$. Moreover, we check that $$\begin{array}{rll}
||aa^d-xa^2a^d||^{\frac{1}{n}}&=&||a^n(a^d)^n-xa^{n+1}(a^d)^n||^{\frac{1}{n}}\\
&=&||a^n-xa^{n+1}||^{\frac{1}{n}}||a^d||.
\end{array}$$ Since $\lim_{n\to \infty}||a^n-xa^{n+1}||^{\frac{1}{n}}=0,$ we have
$$\lim_{n\to \infty}||aa^d-xa^2a^d||^{\frac{1}{n}}=0.$$ This implies that $aa^d=xa^2a^d$, and so $(xa-1)a^d=((xa-1)aa^d)a^d=0.$
By virtue of Theorem 1.2, $a^{\tiny\textcircled{d}}=aa^dz$ for some $z\in \mathcal{A}$, and then $im(x)\subseteq im(a^d)$, as desired.

$(2)\Rightarrow (3)$ By hypothesis, we have $im(x)\subseteq im(a^d)=im(xaa^d)\subseteq im(x)$, and so $im(x)=im(a^d)$.
Clearly, $x^*=(xax)^*=(ax)^*x^*=(ax)x^*\in im(aa^d)=im(a^d)$, and then $im(x^*)\subseteq im(a^d)$. Moreover,
$a^d=(aa^d)a^d=a(xa^2a^d)a^d=(ax)(aa^d)=(ax)^*(aa^d)=x^*(a^*aa^d)$; hence, $im(a^d)\subseteq im(x^*)$.
This implies that $im(x^*)=im(a^d)$, as desired.

$(3)\Rightarrow (4)$ This is obvious.

$(4)\Rightarrow (1)$ Since $xax=x$, we have $(xa-1)x=0$, and then $xa-1\in \ell(x)\subseteq ell(a^d)$.
Hence $(xa-1)a^d=0$, and so $xaa^d=a^d$.
Observe that $$\begin{array}{rll}
a^n-xa^{n+1}&=&a^n-a^da^{n+1}+a^da^{n+1}-xa^{n+1}\\
&=&a^n-a^da^{n+1}+xa^da^{n+2}-xa^{n+1}\\
&=&a^n-a^da^{n+1}-x(a^n-a^da^{n+1})a.
\end{array}$$ Then we have $$||a^n-xa^{n+1}||^{\frac{1}{n}}\leq (1+||x||)^{\frac{1}{n}}||a||^{\frac{1}{n}}||a^n-a^da^{n+1}||^{\frac{1}{n}}.$$
Since $\lim_{n\to \infty}||a^n-a^da^{n+1}||^{\frac{1}{n}}=0$, we see that $\lim_{n\to \infty}||a^n-xa^{n+1}||^{\frac{1}{n}}=0.$
Moreover, we have $x^*a^*x^*=x^*$; whence, $(1-x^*a^*)x^*=0$. Thus $1-x^*a^*\in \ell(x^*)\subseteq \ell(a^d)$, and so
$(1-x^*a^*)a^d=0$. We infer that
$x^*a^*a^d=a^d$, and then $(ax)^*a^d=a^d$. Therefore $(ax)^*aa^d=aa^d$. Since $\ell(a^d)\subseteq \ell(x)$, we have $(ax)^*ax=ax$; and so
$(ax)^*=(ax)^*ax=ax$. As $(ax)a^d=(ax)^*a^d=x^*a^*a^d=a^d$, we have $ax-1\in \ell(a^d)\subseteq \ell(x)$, we have $ax^2=x$.
This completes the proof.\end{proof}

The preceding theorem infers that the generalized core-EP inverse and core-EP inverse for a bounded linear operator over a Hilbert space and an element in a ring introduced in~\cite{MD4,MD3} coincide with each other.

\begin{cor} Let $a\in \mathcal{A}$. Then the following are equivalent:\end{cor}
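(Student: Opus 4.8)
The plan is to avoid the defining limit condition altogether and instead reduce the corollary to the ideal-theoretic criterion already available as Theorem 1.2(5): that $a\in\mathcal{A}^{\tiny\textcircled{d}}$ is equivalent to $a\in\mathcal{A}^d$ together with the existence of a projection $q$ satisfying $a^d\mathcal{A}=q\mathcal{A}$, with $a^{\tiny\textcircled{d}}=a^dq$ in that case. The task then becomes trading this right-ideal condition for the annihilator condition $\ell(a^d)=\ell(q)$ that the corollary asks for. The key structural input is that $a^d$ is von Neumann regular with $a$ as an inner inverse: from the g-Drazin identities one obtains $a^daa^d=a^d$, so that $e:=aa^d=a^da$ is an idempotent, and the relations $ea^d=a^d=a^de$ and $e=a^da\in a^d\mathcal{A}$ yield at once $a^d\mathcal{A}=e\mathcal{A}$ and $\ell(a^d)=\ell(e)$.

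First I would isolate the elementary fact that for any two idempotents $e,q$ one has $\ell(q)=\ell(e)$ if and only if $q\mathcal{A}=e\mathcal{A}$. For the forward direction, $1-q\in\ell(q)=\ell(e)$ gives $e=qe$ and, symmetrically, $1-e\in\ell(e)=\ell(q)$ gives $q=eq$; hence $q\in e\mathcal{A}$ and $e\in q\mathcal{A}$, so the two right ideals coincide. The converse is the same computation read in reverse, using that membership in $e\mathcal{A}$ forces $eq=q$ when $e$ is idempotent. Since $q$ is a projection and $e=aa^d$ is an idempotent, this applies verbatim to the pair $(q,e)$.

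With these two ingredients both implications are immediate, and one and the same projection $q$ can be used throughout. For the implication from generalized core-EP invertibility, Theorem 1.2(5) produces a projection $q$ with $a^d\mathcal{A}=q\mathcal{A}$; combining with $a^d\mathcal{A}=e\mathcal{A}$ gives $q\mathcal{A}=e\mathcal{A}$, and the idempotent fact converts this into $\ell(q)=\ell(e)=\ell(a^d)$. Conversely, given a projection $q$ with $\ell(a^d)=\ell(q)$, the identity $\ell(a^d)=\ell(e)$ gives $\ell(q)=\ell(e)$, the idempotent fact returns $q\mathcal{A}=e\mathcal{A}=a^d\mathcal{A}$, and Theorem 1.2(5) then delivers $a\in\mathcal{A}^{\tiny\textcircled{d}}$ together with the formula $a^{\tiny\textcircled{d}}=a^dq$.

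The only delicate point is the passage between left annihilators and right ideals, which fails for general elements of a Banach *-algebra; the argument circumvents this precisely by replacing $a^d$ with the idempotent $e=aa^d$ before any annihilator is compared. I therefore expect the regularity reduction $\ell(a^d)=\ell(e)$ and $a^d\mathcal{A}=e\mathcal{A}$ — valid exactly because $a$ is an inner inverse of $a^d$ — to be the heart of the proof and its one genuine obstacle; once it is in place, everything else is routine idempotent bookkeeping.
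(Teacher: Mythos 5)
Your argument is internally sound, but it proves a different statement from the one asked. What you establish is the equivalence ``$a\in \mathcal{A}^{\tiny\textcircled{d}}$ if and only if $a\in \mathcal{A}^{d}$ and there exists a projection $q$ with $\ell(a^d)=\ell(q)$'' --- that is Theorem 2.3 of the paper (and, with uniqueness, Corollary 2.4). The corollary at hand lists as its conditions the direct-sum decompositions $\mathcal{A}=a^d\mathcal{A}\oplus \ell(a^d)=(a^d)^*\mathcal{A}\oplus \ell(a^d)$ and, in item (3), $\mathcal{A}=\mathcal{A}a^d\oplus r(a^d)=\mathcal{A}(a^d)^*\oplus r(a^d)$; no projection $q$ occurs in its statement. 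These decompositions never appear in your proposal, so neither implication of the corollary is actually addressed. The one substantive step the paper needs --- converting the decomposition hypothesis into the existence of a projection $q$ with $a^d\mathcal{A}=q\mathcal{A}$ (which it does by citing Zhu--Patricio, Proposition 4.2) and, conversely, recovering the decompositions from generalized core-EP invertibility --- is precisely what is missing.

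The two ingredients you isolate are nonetheless correct and worth keeping: for idempotents $e,q$ one indeed has $\ell(q)=\ell(e)$ if and only if $q\mathcal{A}=e\mathcal{A}$, and since $a^daa^d=a^d$ with $e=aa^d=a^da$ idempotent, $\ell(a^d)=\ell(e)$ and $a^d\mathcal{A}=e\mathcal{A}$. Combined with Theorem 1.2(5) this yields a clean proof of Theorem 2.3, slightly different from the paper's (which takes $q=a^d(a^d)^{\tiny\textcircled{\#}}$ and checks $\ell(a^d)=\ell(q)$ directly). But be aware that the remaining bridge to the present corollary is not ``routine idempotent bookkeeping'': condition (2) pairs the right ideal $a^d\mathcal{A}$ with the left ideal $\ell(a^d)=\mathcal{A}(1-q)$, and for a general projection $q$ the sum $q\mathcal{A}+\mathcal{A}(1-q)$ is neither direct nor all of $\mathcal{A}$ (take $q=E_{11}$ in $M_2({\Bbb C})$, so that already $a=E_{11}$ is a problematic test case for the condition as printed). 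So one must genuinely engage with the cited proposition of Zhu and Patricio, or reprove it, and in particular with the role of the second summand $(a^d)^*\mathcal{A}$; your reduction cannot substitute for that step.
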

\begin{enumerate}
\item [(1)] $a\in \mathcal{A}^{\tiny\textcircled{d}}$.
\vspace{-.5mm}
\item [(2)] $a\in \mathcal{A}^d$ and $\mathcal{A}=a^d\mathcal{A}\oplus \ell{(a^d)}=(a^d)^*\mathcal{A}\oplus \ell{(a^d)}.$
\vspace{-.5mm}
\item [(3)] $a\in \mathcal{A}^d$ and $\mathcal{A}=\mathcal{A}a^d\oplus r(a^d)=\mathcal{A}(a^d)^*\oplus r(a^d).$
\end{enumerate}
\begin{proof} $(1)\Leftrightarrow (2)$ Since $\mathcal{A}=a^d\mathcal{A}\oplus \ell{(a^d)}=(a^d)^*\mathcal{A}\oplus \ell{(a^d)},$ it follows by
~\cite[Proposition 4.2]{Z1} that $a^d\mathcal{A}=q\mathcal{A}$ for some projection $q\in \mathcal{A}$. The equivalence is proved by Theorem 2.1.

$(1)\Leftrightarrow (3)$ This is analogously prove by the symmetry.\end{proof}

\begin{thm} Let $a\in \mathcal{A}$. Then $a\in \mathcal{A}^{\tiny\textcircled{d}}$ if and only if\end{thm}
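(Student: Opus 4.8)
The plan is to settle the equivalence by cashing it against Theorem 1.2, condition (5), which already characterizes $a\in\mathcal{A}^{\tiny\textcircled{d}}$ as ``$a\in\mathcal{A}^d$ together with the existence of a projection $q$ with $a^d\mathcal{A}=q\mathcal{A}$'' and supplies the formula $a^{\tiny\textcircled{d}}=a^dq$. On the reading that the displayed condition is ``$a\in\mathcal{A}^d$ and there exists a projection $q\in\mathcal{A}$ with $\ell(a^d)=\ell(q)$, in which case $a^{\tiny\textcircled{d}}=a^dq$'', the whole content of the statement is the passage between the right-ideal equality $a^d\mathcal{A}=q\mathcal{A}$ and the left-annihilator equality $\ell(a^d)=\ell(q)$. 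So I would prove that the annihilator condition is equivalent to condition (5) of Theorem 1.2, and the two implications are handled separately.

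For the forward direction I would start from Theorem 1.2(5): $a\in\mathcal{A}^d$ and $a^d\mathcal{A}=q\mathcal{A}$ for some projection $q$. Since $\mathcal{A}$ is unital, the left annihilator of an element coincides with the left annihilator of the right ideal it generates, i.e. $\ell(a^d)=\ell(a^d\mathcal{A})$ and $\ell(q)=\ell(q\mathcal{A})$; substituting $a^d\mathcal{A}=q\mathcal{A}$ yields $\ell(a^d)=\ell(q)$ immediately. This half is essentially free.

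The converse is where the real work sits, and the key device is to trade $a^d$ for the idempotent $e:=aa^d$. Using the g-Drazin identities $a^d=aa^da^d$, $a^da=aa^d$ and $a^d=a^daa^d$, one checks the two reductions $a^d\mathcal{A}=e\mathcal{A}$ and $\ell(a^d)=\ell(e)$. This is the crux and the only genuine obstacle: for a general element, equality of one-sided annihilators controls nothing about one-sided ideals, so one must first replace $a^d$ by an idempotent generating both the same right ideal and the same left annihilator. Once this is in place, $e$ and $q$ are both idempotents, and $\ell(e)=\ell(q)$ forces $e\mathcal{A}=q\mathcal{A}$: indeed $1-e\in\ell(e)=\ell(q)$ gives $q=eq$, and symmetrically $1-q\in\ell(q)=\ell(e)$ gives $e=qe$, whence $q\in e\mathcal{A}$ and $e\in q\mathcal{A}$, that is $e\mathcal{A}=q\mathcal{A}$. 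Combining with $a^d\mathcal{A}=e\mathcal{A}$ gives $a^d\mathcal{A}=q\mathcal{A}$, and Theorem 1.2(5) then delivers $a\in\mathcal{A}^{\tiny\textcircled{d}}$ with $a^{\tiny\textcircled{d}}=a^dq$. After the idempotent reduction is spotted, every remaining step is a one-line identity manipulation.
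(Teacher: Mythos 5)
Your proof is correct, but it takes a genuinely different route from the paper's. The paper works through condition (7) of Theorem 1.2 rather than condition (5): in the forward direction it sets $q=a^d(a^d)^{\tiny\textcircled{\#}}$ and verifies $\ell(a^d)=\ell(q)$ by two direct computations, and in the converse it extracts $q=aa^dq$ and $a^d=qa^d$ from the annihilator hypothesis and then checks by hand the four defining equations showing that $aq$ is a core inverse of $a^d$, so that $a^d\in\mathcal{A}^{\tiny\textcircled{\#}}$ and Theorem 1.2 applies. You instead translate everything into the right-ideal language of condition (5): your forward half is the one-line observation $\ell(a^d)=\ell(a^d\mathcal{A})=\ell(q\mathcal{A})=\ell(q)$, and your converse replaces $a^d$ by the idempotent $e=aa^d$ (which generates the same right ideal and has the same left annihilator, both checks using only the commuting g-Drazin identities) and then uses the standard fact that two idempotents with equal left annihilators generate the same right ideal, giving $a^d\mathcal{A}=e\mathcal{A}=q\mathcal{A}$ and hence condition (5) together with the formula $a^{\tiny\textcircled{d}}=a^dq$. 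Your route is shorter and more conceptual: it isolates the one genuine obstacle, namely that annihilator equality only controls ideal equality after passing to idempotents. The paper's computation, by contrast, has the side benefit of explicitly exhibiting $aq$ as the core inverse of $a^d$. Both arguments lean equally on the previously established Theorem 1.2, so neither is more elementary in substance, though your forward direction is strictly simpler than the paper's.
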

\begin{enumerate}
\item [(1)] $a\in \mathcal{A}^d$;
\vspace{-.5mm}
\item [(2)] There exists a projection $q\in \mathcal{A}$ such that $\ell{(a^d)}=\ell{(q)}.$
\end{enumerate}
In this case, $a^{\tiny\textcircled{d}}=a^dq.$
\begin{proof}  $\Longrightarrow $ In view of Theorem 1.2, $a^d\in \mathcal{A}^{\tiny\textcircled{\#}}$ and
$a^{\tiny\textcircled{d}}=(a^d)^2(a^d)^{\tiny\textcircled{\#}}.$ Let $q=a^d(a^d)^{\tiny\textcircled{\#}}$. Then $q^2=q=q^*$, i.e., $p\in \mathcal{A}$ is a
projection. If $xa^d=0$, then $xq=(xa^d)(a^d)^{\tiny\textcircled{\#}}=0$. If $ya^d(a^d)^{\tiny\textcircled{\#}}=0$, then
$ya^d=[ya^d(a^d)^{\tiny\textcircled{\#}}]a^d=0$. Therefore $\ell{(a^d)}=\ell{(q)}.$ Moreover, we have $$
a^{\tiny\textcircled{d}}=(a^d)^2(a^d)^{\tiny\textcircled{\#}}=a^d(a^d(a^d)^{\tiny\textcircled{\#}})=a^dq,$$ as desired.

$\Longleftarrow $ By hypothesis, there exists a projection $q\in \mathcal{A}$ such that $\ell{(a^d)}=\ell{(q)}.$ Since $1-aa^d\in \ell{(a^d)}$, we have
$(1-aa^d)q=0$, and then $q=aa^dq$. As $1-q\in \ell(q)$, we get $(1-q)a^d=0$; hence, $a^d=qa^d$. It is easy to verify that
$$\begin{array}{c}
a^d(aq)=q, [a^d(aq)]^*=a^d(aq), (aq)(a^d)^2=a(qa^d)a^d=a(a^d)^2=a^d,\\
a^d(aq)^2=(a^daq)aq=aq-(1-q)aq=aq-(1-q)a^da^2=aq.
\end{array}$$ This implies that $a^d\in \mathcal{A}^{\tiny\textcircled{\#}}$. According to Theorem 1.2,
$a\in \mathcal{A}^{\tiny\textcircled{d}}$.\end{proof}

\begin{cor} Let $a\in \mathcal{A}$. Then $a\in \mathcal{A}^{\tiny\textcircled{d}}$ if and only if\end{cor}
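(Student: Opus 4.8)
The plan is to read this corollary as the right-annihilator counterpart of Theorem 2.3, asserting that $a\in\mathcal{A}^{\tiny\textcircled{d}}$ if and only if $a\in\mathcal{A}^d$ and there is a projection $q\in\mathcal{A}$ with $r((a^d)^*)=r(q)$, in which case $a^{\tiny\textcircled{d}}=a^dq$. I would deduce it by transporting the left-annihilator characterization of Theorem 2.3 through the involution, in the same spirit as the symmetry argument invoked for Corollary 2.2, rather than redeveloping the theory on the right.

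First I would record the elementary annihilator duality valid in any Banach *-algebra: for $b,y\in\mathcal{A}$ the identity $yb=0$ is equivalent to $b^*y^*=0$, so that $y\in\ell(b)$ exactly when $y^*\in r(b^*)$. Since $*$ is a conjugate-linear anti-automorphism, it maps the left ideal $\ell(b)$ bijectively onto the right ideal $r(b^*)$, and therefore $\ell(b)=\ell(q)$ holds if and only if $r(b^*)=r(q^*)$ holds; passing to sets, equalities of left annihilators are carried to equalities of right annihilators and back.

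Second, I would specialize this to $b=a^d$ and to a projection $q$, for which $q^*=q$. Then the condition of Theorem 2.3, namely the existence of a projection $q$ with $\ell(a^d)=\ell(q)$, holds precisely when there is a projection $q$ with $r((a^d)^*)=r(q)$, and the projection $q$ is the \emph{same} in both statements; no new projection must be constructed. Feeding this into Theorem 2.3, which also supplies $a^{\tiny\textcircled{d}}=a^dq$, yields the asserted equivalence with the inverse formula transferred verbatim.

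The point that I expect to need the most care is exactly this bookkeeping: confirming that the projection delivered by the right-annihilator condition coincides with the one used in Theorem 2.3, so that the expression for $a^{\tiny\textcircled{d}}$ is preserved without alteration. This rests on $q^*=q$ together with the fact that $*$ sends $\ell(a^d)=\ell(q)$ to $r((a^d)^*)=r(q)$ and its inverse sends it back, using $((a^d)^*)^*=a^d$. Once this identification is secured, the corollary is an immediate consequence of Theorem 2.3 and requires no further computation.
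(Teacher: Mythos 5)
Your proposal proves a different statement from the one this corollary actually makes. The corollary in question is Corollary~2.4, whose content is that $a\in \mathcal{A}^{\tiny\textcircled{d}}$ if and only if $a\in \mathcal{A}^d$ and there exists a \emph{unique} projection $q$ with $\ell(a^d)=\ell(q)$; its entire added value over Theorem~2.3 is the uniqueness of $q$. Your argument instead transports Theorem~2.3 through the involution to obtain a right-annihilator condition $r((a^d)^*)=r(q)$ (that duality step itself is fine, and is essentially how the paper handles the symmetric half of Corollary~2.2), but nowhere do you address uniqueness, so the new content of the corollary is not established.

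What the paper does for the forward direction: Theorem~2.3 supplies one projection $q$ with $\ell(a^d)=\ell(q)$; if $p$ is another projection with $\ell(a^d)=\ell(p)$, then, exactly as in the proof of Theorem~2.3, $1-aa^d\in \ell(a^d)$ forces $q=aa^dq$ and $p=aa^dp$, while the concluding formula of Theorem~2.3 gives $a^dq=a^{\tiny\textcircled{d}}=a^dp$ by the uniqueness of the generalized core-EP inverse; hence $p=a(a^dp)=a(a^dq)=q$. The converse is immediate from Theorem~2.3, since existence of a unique such projection in particular gives existence. If you want to salvage your write-up, keep your duality observation as a side remark, but the proof must contain this comparison of two competing projections; without it there is a genuine gap.
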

\begin{enumerate}
\item [(1)] $a\in \mathcal{A}^d$;
\vspace{-.5mm}
\item [(2)] There exists a unique projection $q\in \mathcal{A}$ such that $\ell{(a^d)}=\ell{(q)}$.
\end{enumerate}
\begin{proof} $\Longrightarrow $ In view of Theorem 2.3, we can find a projection $q\in \mathcal{A}$ such that $\ell{(a^d)}=\ell{(q)}$. If
$\ell{(a^d)}=\ell{(p)}$ for a projection $p\in \mathcal{A}$. As in the proof of Theorem 2.3, we have
$$q=aa^dq, p=aa^dp, a^d=a^dq=a^dp.$$ Therefore $p=a(a^dp)=a(a^dq)=q,$ as required.

$\Longleftarrow $ This is obvious by Theorem 2.3.\end{proof}

\section{reverse order law and related properties}

In this section we establish the reverse order for generalized core-EP inverse in a Banach *-algebra. Let $a,b,c\in \mathcal{A}$. An element
$a$ has $(b,c)$-inverse provide that there exists $x\in \mathcal{A}$ such that $$xab=b, cax=c ~\mbox{and}~ x\in b\mathcal{A}x\bigcap x\mathcal{A}c.$$ If
such $x$ exists, it is unique and denote it by $a^{(b,c)}$ (see~\cite{D1}).

\begin{lem} Let $a\in \mathcal{A}$. Then the following are equivalent:\end{lem}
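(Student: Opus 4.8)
The plan is to prove the stated equivalences by routing every condition through the characterizations already established in Theorem 1.2, in particular parts (5) and (7), which tie generalized core-EP invertibility of $a$ to the g-Drazin invertibility of $a$ together with the core invertibility of $a^d$ (equivalently, the existence of a projection $q$ with $a^d\mathcal{A}=q\mathcal{A}$). Since the $(b,c)$-inverse has just been introduced, I expect the new condition to assert that $a^d$ possesses a $(b,c)$-inverse for the natural choice $b=a^d$, $c=(a^d)^*$, so the core of the argument is to translate between core invertibility of $a^d$ and this particular $(b,c)$-invertibility.

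First I would treat the implication from $a\in\mathcal{A}^{\tiny\textcircled{d}}$ to the $(b,c)$-statement. By Theorem 1.2(7) we have $a^d\in\mathcal{A}^{\tiny\textcircled{\#}}$, and I would take the candidate $x=(a^d)^{\tiny\textcircled{\#}}$ and verify directly the defining relations of the $(b,c)$-inverse: the equations $x\,a^d\,b=b$ and $c\,a^d\,x=c$, together with the absorption $x\in b\mathcal{A}x\cap x\mathcal{A}c$. Here the projection identity $(a^d(a^d)^{\tiny\textcircled{\#}})^*=a^d(a^d)^{\tiny\textcircled{\#}}$ coming from the core inverse is exactly what lets me convert the right-sided condition into its starred left-sided counterpart.

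For the converse I would start from the existence of the $(b,c)$-inverse of $a^d$ and reconstruct the core inverse of $a^d$: the defining equations force $a^d$ to be regular and yield the range projection required by Theorem 1.2(5), after which the formula $a^{\tiny\textcircled{d}}=(a^d)^2(a^d)^{\tiny\textcircled{\#}}$ follows. Uniqueness of the $(b,c)$-inverse then pins down the displayed expression for $a^{\tiny\textcircled{d}}$. If any intermediate condition is phrased through annihilators as $\ell(a^d)=\ell(q)$, I would close that branch off directly by citing Theorem 2.3 and Corollary 2.4.

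The step I expect to be the main obstacle is verifying the two absorption conditions $x\in b\mathcal{A}x$ and $x\in x\mathcal{A}c$ simultaneously, since together they package the range and annihilator data of $a^d$ on both sides at once; the delicate point is matching the left annihilator controlled by $(a^d)^*$ against the right-hand factorization. I would handle this by inserting the projection $a^d(a^d)^{\tiny\textcircled{\#}}$ and using the self-adjointness $(ax)^*=ax$ to transport the involution between the two conditions.
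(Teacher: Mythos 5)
You have proved the wrong statement. The lemma in question (Lemma 3.1) asserts that $a\in \mathcal{A}^{\tiny\textcircled{d}}$ if and only if $a\in \mathcal{A}^d$ and $a$ \emph{itself} admits a $(b,c)$-inverse with $b=aa^d$ and $c=(aa^d)^*$, and that this $(b,c)$-inverse equals $a^{\tiny\textcircled{d}}$. Your proposal instead works with a $(b,c)$-inverse of $a^d$ for the choice $b=a^d$, $c=(a^d)^*$, and takes $x=(a^d)^{\tiny\textcircled{\#}}$ as the candidate. Neither the element being inverted nor the candidate inverse matches the lemma: by Theorem 1.2(7) one has $a^{\tiny\textcircled{d}}=(a^d)^2(a^d)^{\tiny\textcircled{\#}}$, so $(a^d)^{\tiny\textcircled{\#}}$ is not the object the displayed formula $a^{\tiny\textcircled{d}}=a^{(aa^d,(aa^d)^*)}$ identifies. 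The equations you propose to verify, $x\,a^d\,b=b$ and $c\,a^d\,x=c$ with $b=a^d$, are not the defining equations $xab=b$, $cax=c$ with $b=aa^d$, $c=(aa^d)^*$, and you give no argument translating one $(b,c)$-invertibility into the other. A correct direct argument would take $x=a^{\tiny\textcircled{d}}$ and check $xa(aa^d)=aa^d$ (using $xaa^d=a^d$, which follows from Theorem 2.1(2)), $(aa^d)^*ax=(aa^d)^*$ (using $(ax)^*=ax$ and $axa^d=a^d$), and the absorption conditions $x\in aa^d\mathcal{A}x\cap x\mathcal{A}(aa^d)^*$; the converse would extract from the $(aa^d,(aa^d)^*)$-inverse a $\{1,3\}$-inverse of $aa^d$ and invoke Theorem 1.2(8).

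For what it is worth, the paper does not prove this lemma at all: its ``proof'' is the single citation to [MD4, Theorem 2.3]. So a self-contained argument would be welcome, but yours as written does not establish the equivalence actually stated.
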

\begin{enumerate}
\item [(1)] $a\in \mathcal{A}^{\tiny\textcircled{d}}$.
\vspace{-.5mm}
\item [(2)] $a\in \mathcal{A}^d$ and $a$ has $(aa^d, (aa^d)^*)$-inverse.
\end{enumerate}
In this case, $a^{\tiny\textcircled{d}}=a^{(aa^d, (aa^d)^*)}.$
\begin{proof} See~\cite[Theorem 2.3]{MD4}.\end{proof}

Let ${\Bbb C}^*$ be the set of all nonzero complex numbers. For future use, We now record the following.

\begin{lem} Let $a,x\in \mathcal{A}$, $\lambda,\mu\in {\Bbb C}^*, ax=\lambda xa$ and $a^*x=\mu xa^*$. If $a\in \mathcal{A}^{\tiny\textcircled{d}}$, then
$a^{\tiny\textcircled{d}}x=\lambda^{-1}xa^{\tiny\textcircled{d}}.$\end{lem}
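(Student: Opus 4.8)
The plan is to reduce the statement to the generalized Drazin inverse, where the scalar commutation can be obtained by a Pierce decomposition, and then to account for the range projection that separates $a^{\tiny\textcircled{d}}$ from $a^d$ by exploiting the second hypothesis $a^*x=\mu xa^*$.

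First I would establish the auxiliary fact that $ax=\lambda xa$ alone already forces $a^dx=\lambda^{-1}xa^d$. Put $p=aa^d=a^da$, so that $a$ commutes with $p$ and with $a^\pi=1-p$, and split $x=pxp+pxa^\pi+a^\pi xp+a^\pi xa^\pi$ along the corresponding Pierce corners. Comparing the four components of $ax=\lambda xa$, and using that $ap$ is invertible in $p\mathcal{A}p$ with inverse $a^d$ while $aa^\pi$ is quasinilpotent, the off-diagonal relations read $(aa^\pi)(a^\pi xp)=\lambda(a^\pi xp)(ap)$ and $(ap)(pxa^\pi)=\lambda(pxa^\pi)(aa^\pi)$. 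Iterating the first gives $a^\pi xp=\lambda^{-m}(aa^\pi)^m(a^\pi xp)(a^d)^m$, and since $\|(aa^\pi)^m\|^{1/m}\to0$ the scalar $|\lambda|^{-m}\|(aa^\pi)^m\|\,\|a^d\|^m$ tends to $0$; hence it is $<1$ for large $m$ and the estimate $\|a^\pi xp\|\le|\lambda|^{-m}\|(aa^\pi)^m\|\,\|a^d\|^m\,\|a^\pi xp\|$ forces $a^\pi xp=0$, the symmetric argument giving $pxa^\pi=0$. On the surviving corner the relation $(ap)(pxp)=\lambda(pxp)(ap)$ with $ap$ invertible yields $a^dx=\lambda^{-1}xa^d$ directly. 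I expect this quasinilpotent norm estimate to be the main obstacle, as it is the only genuinely analytic step.

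A single multiplication then upgrades this to commutation of the relevant idempotents: $px=aa^dx=a(\lambda^{-1}xa^d)=\lambda^{-1}(ax)a^d=\lambda^{-1}(\lambda xa)a^d=xp$. Applying the auxiliary fact to $a^*$ in place of $a$ — legitimate since $(a^*)^d=(a^d)^*$, so that $a^*(a^*)^d=p^*$, and quasinilpotency is $*$-invariant — the hypothesis $a^*x=\mu xa^*$ gives $(a^*)^dx=\mu^{-1}x(a^*)^d$ and hence, by the same one-line computation, $p^*x=xp^*$. Thus $x$ commutes with both $p$ and $p^*$.

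Finally I would invoke the projection $q$ of Theorem 1.2(5), for which $q\mathcal{A}=a^d\mathcal{A}$ and $a^{\tiny\textcircled{d}}=a^dq$; since $a^d=pa^d$ and $p=a^da\in a^d\mathcal{A}$ one has $a^d\mathcal{A}=p\mathcal{A}$, so $qp=p$, $pq=q$, and taking adjoints $\mathcal{A}q=\mathcal{A}p^*$, i.e. $p^*q=p^*$ and $qp^*=q$. From $q=pq$ together with $px=xp$ I get $xq=xpq=pxq\in p\mathcal{A}=q\mathcal{A}$, so $xq=qxq$; symmetrically, from $q=qp^*$ together with $p^*x=xp^*$ I get $qx=qxp^*\in\mathcal{A}p^*=\mathcal{A}q$, so $qx=qxq$. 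Hence $qx=xq$, and combining this with $a^dx=\lambda^{-1}xa^d$ and $a^{\tiny\textcircled{d}}=a^dq$ gives
\[
a^{\tiny\textcircled{d}}x=a^dqx=a^dxq=\lambda^{-1}xa^dq=\lambda^{-1}xa^{\tiny\textcircled{d}},
\]
which is the claim.
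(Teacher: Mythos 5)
Your proof is correct, but it takes a genuinely different route from the paper's. The paper identifies $a^{\tiny\textcircled{d}}$ with the $(aa^d,(aa^d)^*)$-inverse of $a$ (Lemma 3.1), observes that $\lambda^{-1}a^{\tiny\textcircled{d}}=(\lambda a)^{(\lambda^{-1}aa^d,\mu^{-1}(aa^d)^*)}$, quotes a known result for the commutation $a^dx=\lambda^{-1}xa^d$ (the fact you reprove from scratch with the Pierce-corner quasinilpotent estimate), and then appeals to Drazin's general commutation theorem for $(b,c)$-inverses to transfer the relation to $a^{\tiny\textcircled{d}}$. You instead bypass the $(b,c)$-inverse formalism entirely: after establishing $a^dx=\lambda^{-1}xa^d$ and its adjoint analogue, you deduce that $x$ commutes with both $p=aa^d$ and $p^*$, and then exploit the representation $a^{\tiny\textcircled{d}}=a^dq$ of Theorem 1.2(5) together with $q\mathcal{A}=p\mathcal{A}$ and $\mathcal{A}q=\mathcal{A}p^*$ to force $qx=xq$, from which the conclusion is a one-line computation. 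Each step of your argument checks out, including the norm estimate killing the off-diagonal corners and the double inclusion $xq=qxq=qx$. What the paper's route buys is brevity, at the cost of importing two external theorems; what yours buys is a self-contained proof that makes transparent exactly where the second hypothesis $a^*x=\mu xa^*$ is used, namely only to control $q$ from the left via $\mathcal{A}q=\mathcal{A}p^*$.
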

\begin{proof} In view of Lemma 3.1, $a^{\tiny\textcircled{d}}$ is $(aa^d, (aa^d)^*)$-inverse of $a$.
That is, $a^{\tiny\textcircled{d}}=a^{(aa^d, (aa^d)^*)}.$ One directly checks that $$\lambda^{-1}a^{\tiny\textcircled{d}}=(\lambda a)^{(\lambda^{-1}aa^d,\mu^{-1}(aa^d)^*)}.$$
Since $ax=\lambda xa$, it follows by ~\cite[Theorem 15.2.12]{C1} that $a^dx=\lambda^{-1}xa^d=x(\lambda^{-1}a^d).$ On the other hand, $a^*x=\mu xa^*$, and so
$$(a^d)^*x=(a^*)^dx=\mu^{-1}x(a^*)^d=x(\mu^{-1}(a^d)^*).$$ In light of Lemma 3.1 and ~\cite[Theorem 2.3]{D}, we have $$a^{\tiny\textcircled{d}}x=x(\lambda^{-1} a^{\tiny\textcircled{d}})=\lambda^{-1} xa^{\tiny\textcircled{d}},$$ as required.\end{proof}

We are now ready to prove:

\begin{thm} Let $a,b\in \mathcal{A}^{\tiny\textcircled{d}}$, $\lambda,\mu\in {\Bbb C}^*$. If $ab=\lambda ba, a^*b=\mu ba^*$, then $ab\in \mathcal{A}^{\tiny\textcircled{d}}$. In this case,
$$(ab)^{\tiny\textcircled{d}}=b^{\tiny\textcircled{d}}a^{\tiny\textcircled{d}}=\lambda^{-1}a^{\tiny\textcircled{d}}b^{\tiny\textcircled{d}}.$$\end{thm}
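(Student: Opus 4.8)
The plan is to reduce the whole statement to the annihilator characterization in Theorem 2.1, using the scalar commutation relations to slide the generalized inverses past one another, and to get the second equality for free by symmetry. First I would record the commutation identities I need. Since $ab=\lambda ba$ and $a^*b=\mu ba^*$, Lemma 3.2 gives $a^{\tiny\textcircled{d}}b=\lambda^{-1}ba^{\tiny\textcircled{d}}$; taking adjoints in the hypotheses yields $ba=\lambda^{-1}ab$ and $b^*a=\overline{\mu}\,ab^*$, so Lemma 3.2 applied to the pair $(b,a)$ gives $b^{\tiny\textcircled{d}}a=\lambda a b^{\tiny\textcircled{d}}$. The same scalar-commutation mechanism (via \cite[Theorem 15.2.12]{C1}) produces $a^d b=\lambda^{-1}ba^d$, $b^d a=\lambda ab^d$, hence $b^da^d=\lambda^{-1}a^db^d$, and shows $ab\in\mathcal{A}^d$ with $(ab)^d=b^da^d$. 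From these the scalars cancel to give the mixed commutations I will actually use: the self-adjoint projection $p_a:=aa^{\tiny\textcircled{d}}$ satisfies $p_ab=bp_a$, and symmetrically $q_b:=bb^{\tiny\textcircled{d}}$ satisfies $q_ba=aq_b$.

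Next, writing $x=b^{\tiny\textcircled{d}}a^{\tiny\textcircled{d}}$ and $c=ab$, I would verify the requirements of Theorem 2.1 for $c$ and $x$. Using $q_ba=aq_b$ I get $cx=ab\,b^{\tiny\textcircled{d}}a^{\tiny\textcircled{d}}=aq_ba^{\tiny\textcircled{d}}=q_baa^{\tiny\textcircled{d}}=q_bp_a$. Collapsing products with the commutations together with $a^{\tiny\textcircled{d}}aa^{\tiny\textcircled{d}}=a^{\tiny\textcircled{d}}$, $b^{\tiny\textcircled{d}}bb^{\tiny\textcircled{d}}=b^{\tiny\textcircled{d}}$ and $a^{\tiny\textcircled{d}}aa^d=a^d$ (and their $b$-analogues, all instances of Theorem 2.1 applied to $a$ and $b$ separately), I expect $xcx=x$, $xcc^d=c^d$ so that $(xc-1)c^d=0$, and $x=c^d cx$ so that $\mathrm{im}(x)\subseteq\mathrm{im}(c^d)$. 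Granting the self-adjointness $(cx)^*=cx$, Theorem 2.1 then gives $ab\in\mathcal{A}^{\tiny\textcircled{d}}$ with $(ab)^{\tiny\textcircled{d}}=b^{\tiny\textcircled{d}}a^{\tiny\textcircled{d}}$.

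The main obstacle is exactly that self-adjointness, i.e. showing the two self-adjoint projections $p_a$ and $q_b$ commute, since $cx=q_bp_a$. My plan here is the standard sandwich argument. Because $p_a$ commutes with $b$ and (being self-adjoint) with $b^*$, and the g-Drazin inverse lies in the bicommutant of $b$, one has $p_ab^d=b^dp_a$. Combined with $q_bb^d=b^d$ and $q_b\mathcal{A}=b^d\mathcal{A}$ (from Theorem 2.3 and Corollary 2.2), this gives $q_bp_ab^d=p_ab^d=p_aq_bb^d$, hence $(q_bp_a-p_aq_b)b^d=0$, so $q_bp_a-p_aq_b\in\ell(b^d)=\ell(q_b)$. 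Multiplying on the right by $q_b$ yields $q_bp_aq_b=p_aq_b$, and taking adjoints gives $q_bp_aq_b=q_bp_a$; therefore $p_aq_b=q_bp_a$. Thus $cx=q_bp_a$ is a product of commuting self-adjoint projections, hence self-adjoint, closing the verification.

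Finally, I would obtain the second equality by symmetry rather than a fresh computation. Applying the theorem just proved to the pair $(b,a)$ is legitimate, since $ba=\lambda^{-1}ab$ and $b^*a=\overline{\mu}\,ab^*$, and it gives $(ba)^{\tiny\textcircled{d}}=a^{\tiny\textcircled{d}}b^{\tiny\textcircled{d}}$. Reading $ab=\lambda(ba)$ and invoking the scaling property $(\lambda c)^{\tiny\textcircled{d}}=\lambda^{-1}c^{\tiny\textcircled{d}}$ recorded in the proof of Lemma 3.2, I conclude $(ab)^{\tiny\textcircled{d}}=\lambda^{-1}(ba)^{\tiny\textcircled{d}}=\lambda^{-1}a^{\tiny\textcircled{d}}b^{\tiny\textcircled{d}}$, which together with the previous paragraph gives $(ab)^{\tiny\textcircled{d}}=b^{\tiny\textcircled{d}}a^{\tiny\textcircled{d}}=\lambda^{-1}a^{\tiny\textcircled{d}}b^{\tiny\textcircled{d}}$.
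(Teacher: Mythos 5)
Your overall strategy (verify the annihilator characterization of Theorem 2.1 for $c=ab$ and $x=b^{\tiny\textcircled{d}}a^{\tiny\textcircled{d}}$, with the commuting self-adjoint projections $p_a=aa^{\tiny\textcircled{d}}$, $q_b=bb^{\tiny\textcircled{d}}$ supplying the self-adjointness of $cx$) is genuinely different from the paper's, and most of it is sound; in particular the sandwich argument for $p_aq_b=q_bp_a$ via $\ell(b^d)=\ell(q_b)$ is correct and rather elegant. But there is one step you dismiss as routine that is in fact the crux of the whole theorem: the assertion that ``the same scalar-commutation mechanism \dots shows $ab\in\mathcal{A}^d$ with $(ab)^d=b^da^d$.'' Theorem 2.1 cannot even be invoked without $ab\in\mathcal{A}^d$, and your conditions $(xc-1)c^d=0$ and $\mathrm{im}(x)\subseteq\mathrm{im}(c^d)$ both presuppose the formula for $c^d$. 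This claim does not follow from the commutation identities alone. The obstruction is the quasinilpotent part: if $a_2=aa^{\pi}$ is the quasinilpotent summand of $a$, then $a_2b=\lambda ba_2$ gives $(a_2b)^m=\lambda^{-m(m-1)/2}a_2^mb^m$, so
$$\|(a_2b)^m\|^{1/m}\leq |\lambda|^{-(m-1)/2}\,\|a_2^m\|^{1/m}\,\|b^m\|^{1/m},$$
and the prefactor $|\lambda|^{-(m-1)/2}$ blows up when $|\lambda|<1$. The naive estimate therefore fails to show that the ``quasinilpotent corner'' of $ab$ is quasinilpotent, and exactly the same scalar factor is what forces the paper to split its proof into the cases $|\lambda|\geq 1$ (direct norm estimate) and $|\lambda|\leq 1$ (pass to $ba=\lambda^{-1}ab$, apply the first case, then transfer back with Cline's formula). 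Your proposal contains no mechanism for handling the $|\lambda|<1$ regime.

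The gap is repairable without abandoning your route: either reproduce the paper's case split just to establish $ab\in\mathcal{A}^d$ and $(ab)^d=b^da^d$ (prove it for $|\lambda|\geq 1$ by the norm estimate, then for $|\lambda|<1$ get $ba\in\mathcal{A}^d$ first and use Cline's formula, $(ab)^d=a[(ba)^d]^2b$), or use a spectral argument ($\sigma(a_2b)\setminus\{0\}=\sigma(ba_2)\setminus\{0\}$ together with $\sigma(a_2b)=\lambda\,\sigma(ba_2)$ forces $\sigma(a_2b)=\{0\}$ when $|\lambda|\neq 1$, while the norm estimate covers $|\lambda|=1$). Once $ab\in\mathcal{A}^d$ with $(ab)^d=b^da^d$ is secured, the remainder of your verification of Theorem 2.1(2) does go through, and your closing symmetry argument for $(ab)^{\tiny\textcircled{d}}=\lambda^{-1}a^{\tiny\textcircled{d}}b^{\tiny\textcircled{d}}$ is fine (though the paper gets this equality immediately from $a^{\tiny\textcircled{d}}b^{\tiny\textcircled{d}}=\lambda b^{\tiny\textcircled{d}}a^{\tiny\textcircled{d}}$, which you have already derived). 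As written, however, the proof is incomplete at precisely the point where the real difficulty of the theorem lives.
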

\begin{proof} In view of lemma 3.2, we have $ab^{\tiny\textcircled{d}}=\lambda^{-1}b^{\tiny\textcircled{d}}a$ and
$a^*b^{\tiny\textcircled{d}}=\mu^{-1}b^{\tiny\textcircled{d}}a^*$. Then $a^{\tiny\textcircled{d}}b^{\tiny\textcircled{d}}=\lambda b^{\tiny\textcircled{d}}a^{\tiny\textcircled{d}}.$ Similarly, we have $ba^{\tiny\textcircled{d}}=\lambda a^{\tiny\textcircled{d}}b$.

Case 1. $||\lambda||\geq 1$. Set $x=b^{\tiny\textcircled{d}}a^{\tiny\textcircled{d}}.$ Then we check that
$$\begin{array}{rll}
abx&=&abb^{\tiny\textcircled{d}}a^{\tiny\textcircled{d}}=\lambda^{-1}aba^{\tiny\textcircled{d}}b^{\tiny\textcircled{d}}
=aa^{\tiny\textcircled{d}}bb^{\tiny\textcircled{d}},\\
(abx)^*&=&bb^{\tiny\textcircled{d}}aa^{\tiny\textcircled{d}}=\lambda bab^{\tiny\textcircled{d}}a^{\tiny\textcircled{d}}=baa^{\tiny\textcircled{d}}b^{\tiny\textcircled{d}}\\
&=&\lambda^{-1}aba^{\tiny\textcircled{d}}b^{\tiny\textcircled{d}}=aa^{\tiny\textcircled{d}}bb^{\tiny\textcircled{d}}=abx,\\
abx^2&=&(\lambda^{-1}aba^{\tiny\textcircled{d}}b^{\tiny\textcircled{d}})b^{\tiny\textcircled{d}}a^{\tiny\textcircled{d}}=aa^{\tiny\textcircled{d}}b(b^{\tiny\textcircled{d}})^2a^{\tiny\textcircled{d}}\\
&=&aa^{\tiny\textcircled{d}}b^{\tiny\textcircled{d}}a^{\tiny\textcircled{d}}=\lambda ab^{\tiny\textcircled{d}}a^{\tiny\textcircled{d}}a^{\tiny\textcircled{d}}\\
&=&b^{\tiny\textcircled{d}}a(a^{\tiny\textcircled{d}})^2=b^{\tiny\textcircled{d}}a^{\tiny\textcircled{d}}=x,\\
(ab)^{m}&=&\lambda^{-\frac{(m-1)m)}{2}}a^{m}b^{m}, (ab)^{m+1}=\lambda^{-\frac{m(m+1)}{2}}a^{m+1}b^{m+1},\\
b^{\tiny\textcircled{d}}a^{m+1}&=&\lambda^{m+1}ab^{\tiny\textcircled{d}},\\
(ab)^m-x(ab)^{m+1}&=&\lambda^{-\frac{(m-1)m}{2}}a^mb^m-\lambda^{-\frac{m(m+1)}{2}}b^{\tiny\textcircled{d}}a^{\tiny\textcircled{d}}a^{m+1}b^{m+1}\\
&=&\lambda^{-\frac{(m-1)m}{2}}[a^mb^m-\lambda^{-m}b^{\tiny\textcircled{d}}a^{\tiny\textcircled{d}}a^{m+1}b^{m+1}]\\
&=&\lambda^{-\frac{(m-1)m}{2}}[a^mb^m-\lambda^{-(m+1)}a^{\tiny\textcircled{d}}b^{\tiny\textcircled{d}}a^{m+1}b^{m+1}]\\
&=&\lambda^{-\frac{(m-1)m}{2}}[a^mb^m-a^{\tiny\textcircled{d}}a^{m+1}b^{\tiny\textcircled{d}}b^{m+1}]\\
&=&\lambda^{-\frac{(m-1)m}{2}}[(a^m-a^{\tiny\textcircled{d}}a^{m+1})b^m+a^{\tiny\textcircled{d}}a^{m+1}(b^m-b^{\tiny\textcircled{d}}b^{m+1})].
\end{array}$$ Hence,
$$\begin{array}{rl}
&||(ab)^m-x(ab)^{m+1}||^{\frac{1}{m}}\\
\leq& ||a^m-a^{\tiny\textcircled{d}}a^{m+1}||^{\frac{1}{m}}||b^m||^{\frac{1}{m}}+
||a^{\tiny\textcircled{d}}a^{m+1}||^{\frac{1}{m}}||b^m-b^{\tiny\textcircled{d}}b^{m+1}||^{\frac{1}{m}}.
\end{array}$$
Then $$\lim_{n\to \infty}||(ab)^m-x(ab)^{m+1}||^{\frac{1}{m}}=0.$$ Therefore $ab\in \mathcal{A}^{\tiny\textcircled{d}}$ and
$(ab)^{\tiny\textcircled{d}}=x$.

Case 2. $||\lambda||\leq 1$. Then $ba=\lambda^{-1}ab, ba^*=\mu^{-1}a^*b$. Since $||\lambda^{-1}||\geq 1$, as in the proof of Step 1, we have
$ba\in \mathcal{A}^{\tiny\textcircled{d}}$ and $(ba)^{\tiny\textcircled{d}}=a^{\tiny\textcircled{d}}b^{\tiny\textcircled{d}}$. In view of Theorem 2.1,
$ba\in \mathcal{A}^d$ and $ba(ba)^d\in \mathcal{A}^{(1,3)}$. By virtue of Cline's formula (see~\cite[Theorem 2.1]{L}),
$ab\in \mathcal{A}^d$. Clearly, $a(ba)=\lambda (ba)a$, and then $a(ba)^d=\lambda^{-1}(ba)^da$.
By using Cline's formula again, we have $$ab(ab)^d=aba[(ba)^d]^2b=a(ba)^db=\lambda^{-1}(ba)^d(ab)=(ba)^d(ba)=ba(ba)^d.$$
Hence $(ab)(ab)^d\in \mathcal{A}^{(1,3)}$. In light of Theorem 2.1, $ab\in \mathcal{A}^{\tiny\textcircled{d}}$.
Further, we check that $$\begin{array}{rll}
(ab)^{\tiny\textcircled{d}}&=&(ab)^d[(ab)(ab)^d]^{(1,3)}=(ab)^d[(ba)(ba)^d]^{(1,3)}=(ab)^d[(ba)(ba)^{\tiny\textcircled{d}}]\\
&=&(ab)^d(ba)a^{\tiny\textcircled{d}}b^{\tiny\textcircled{d}}=b^da^d(ba)a^{\tiny\textcircled{d}}b^{\tiny\textcircled{d}}=\lambda^{-1}b^dbaa^da^{\tiny\textcircled{d}}b^{\tiny\textcircled{d}}\\
&=&\lambda^{-1}b^dba^{\tiny\textcircled{d}}b^{\tiny\textcircled{d}}=b^dbb^{\tiny\textcircled{d}}a^{\tiny\textcircled{d}}=b^{\tiny\textcircled{d}}a^{\tiny\textcircled{d}},\\
\end{array}$$ thus yielding the result.\end{proof}

\begin{lem} Let $a\in \mathcal{A}$ and $k\in {\Bbb N}$. Then the following are equivalent:\end{lem}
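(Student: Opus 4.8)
The plan is to reduce everything to the characterization in Theorem 2.3, which asserts that $a\in\mathcal{A}^{\tiny\textcircled{d}}$ exactly when $a\in\mathcal{A}^d$ and there is a projection $q\in\mathcal{A}$ with $\ell(a^d)=\ell(q)$, in which case $a^{\tiny\textcircled{d}}=a^dq$. Since the power $a^k$ enters the generalized core-EP condition only through its g-Drazin inverse, the entire argument rests on transferring the g-Drazin data from $a$ to $a^k$ and back. First I would invoke the standard fact that $a\in\mathcal{A}^d$ if and only if $a^k\in\mathcal{A}^d$, with $(a^k)^d=(a^d)^k$; this settles the first clause of Theorem 2.3 simultaneously for $a$ and for $a^k$.

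The key step, and the one I expect to be the real obstacle, is to verify that the annihilator appearing in the projection clause is the same for both elements, namely $\ell(a^d)=\ell\big((a^k)^d\big)=\ell\big((a^d)^k\big)$. Because left annihilators depend only on the right ideal generated, it suffices to prove $a^d\mathcal{A}=(a^d)^k\mathcal{A}$. The inclusion $(a^d)^k\mathcal{A}\subseteq a^d\mathcal{A}$ is immediate from $(a^d)^k=a^d(a^d)^{k-1}$. For the reverse inclusion I would exploit that $aa^d$ is idempotent and commutes with $a$, computing $(a^d)^ka^{k-1}=a^d(a^da)^{k-1}=a^d(aa^d)^{k-1}=a^d$, which exhibits $a^d\in(a^d)^k\mathcal{A}$. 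Once $\ell(a^d)=\ell\big((a^d)^k\big)$ is established, the existence of a projection $q$ with $\ell(a^d)=\ell(q)$ is literally the same statement as the existence of a projection with $\ell\big((a^k)^d\big)=\ell(q)$, so Theorem 2.3 delivers the equivalence of the two conditions at once.

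It then remains to identify the inverse. Using Theorem 2.3 I would write $a^{\tiny\textcircled{d}}=a^dq$ and, since $\ell\big((a^k)^d\big)=\ell(a^d)=\ell(q)$ with $q$ unique by Corollary 2.4, also $(a^k)^{\tiny\textcircled{d}}=(a^k)^dq=(a^d)^kq$ for the very same projection $q$. Finally I would extract from the proof of Theorem 2.3 the relations $qa^d=a^d$ and $aa^dq=q$, and run a short induction: assuming $(a^dq)^{k-1}=(a^d)^{k-1}q$, one obtains $(a^dq)^k=(a^d)^{k-1}q\,a^dq=(a^d)^{k-1}(qa^d)q=(a^d)^kq$. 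Hence $(a^{\tiny\textcircled{d}})^k=(a^dq)^k=(a^d)^kq=(a^k)^{\tiny\textcircled{d}}$, which closes the argument. The only genuine content is the ideal identity $a^d\mathcal{A}=(a^d)^k\mathcal{A}$; everything after it is bookkeeping around the idempotent $aa^d$.
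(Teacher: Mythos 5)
Your argument is correct, and it takes a genuinely different route from the paper. The paper handles $(1)\Rightarrow(2)$ by citing an external result of Mosi\'c and proves $(2)\Rightarrow(1)$ by brute force: setting $y=(a^k)^{\tiny\textcircled{d}}$ and $x=a^{k-1}y$ and verifying the three defining identities $ax^2=x$, $(ax)^*=ax$, $\lim_n\|a^n-xa^{n+1}\|^{1/n}=0$ directly. You instead funnel both directions through the annihilator characterization of Theorem 2.3, and the whole lemma collapses to the single ideal identity $a^d\mathcal{A}=(a^d)^k\mathcal{A}$, proved correctly via $(a^d)^ka^{k-1}=a^d(aa^d)^{k-1}=a^d$; combined with the standard facts $(a^k)^d=(a^d)^k$ and that $\ell(x)$ depends only on $x\mathcal{A}$ in a unital algebra, the projection clause becomes literally the same condition for $a$ and $a^k$. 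What your approach buys is symmetry (both implications come for free at once, with no external citation) and it cleanly sidesteps a delicate limit manipulation in the paper's direct verification; what it costs is reliance on the uniqueness statement of Corollary 2.4 to identify the projection $q$ for $a$ and for $a^k$, which you correctly invoke. Two small points to tidy up: you should record (or cite) the fact $(a^k)^d=(a^d)^k$ explicitly, since the paper never states it; and you stop after proving $(a^{\tiny\textcircled{d}})^k=(a^k)^{\tiny\textcircled{d}}$, whereas the lemma also asserts $a^{\tiny\textcircled{d}}=a^{k-1}(a^k)^{\tiny\textcircled{d}}$ --- this follows in one line from your own relations, namely $a^{k-1}(a^d)^kq=(aa^d)^{k-1}a^dq=a^dq=a^{\tiny\textcircled{d}}$, but it should be said.
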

\begin{enumerate}
\item [(1)] $a\in \mathcal{A}^{\tiny\textcircled{d}}$.
\vspace{-.5mm}
\item [(2)] $a^k\in \mathcal{A}^{\tiny\textcircled{d}}$.
\end{enumerate}
In this case, $$(a^k)^{\tiny\textcircled{d}}=(a^{\tiny\textcircled{d}})^k, a^{\tiny\textcircled{d}}=a^{k-1}(a^k)^{\tiny\textcircled{d}}.$$
\begin{proof} $(1)\Rightarrow (2)$ See~\cite[Lemma 2.4]{MD4}.

$(2)\Rightarrow (1)$ $(2)\Rightarrow (1)$ Let $y=(a^k)^{\tiny\textcircled{d}}$. Then $$y=a^ky^2, (a^ky)^*=a^ky, \lim_{n\to \infty}||(a^k)^n-y(a^k)^{n+1}||^{\frac{1}{n}}=0.$$
Let $x=a^{k-1}y$. We verify that $$\begin{array}{rll}
ax^2&=&a^kya^{k-1}y=a^kya^{k-1}(a^ky^2)\\
&=&a^kya^{k-1}(a^k)^{m+1}y^{m+2}=a^k[y(a^k)^{m+1}]a^{k-1}y^{m+2}\\
&=&a^k[y(a^k)^{m+1}-(a^k)^m]a^{k-1}y^{m+2}\\
&+&(a^k)^{m+1}a^{k-1}y^{m+2}=(a^k)^{m+1}a^{k-1}y^{m+2}\\
&=&a^{k-1}y=x,\\
(ax)^*&=&(a^ky)^*=a^ky=ax,\\
||a^n-xa^{n+1}||^{\frac{1}{n}}&=&||a^n-a^{k-1}ya^{n+1}||^{\frac{1}{n}}\\
&\leq &||a^{k-1}||^{\frac{1}{n}}||a^{n-k+1}-ya^{n+1}||^{\frac{1}{n}}\\
\end{array}$$
Then $\lim_{n\to \infty}||a^n-xa^{n+1}||^{\frac{1}{n}}=0$. Accordingly, $a\in \mathcal{A}^{\tiny\textcircled{d}}$ and
$a^{\tiny\textcircled{d}}=a^{k-1}(a^k)^{\tiny\textcircled{d}}$.\end{proof}

We are now ready to prove:

\begin{thm} Let $a,b\in \mathcal{A}^{\tiny\textcircled{d}}$. If $bab=\lambda ab^2=\mu b^2a$ and $ba^*b=\lambda'a^*b^2=\mu'b^2a^*$, then $ab\in
\mathcal{A}^{\tiny\textcircled{d}}$. In this case,
$$(ab)^{\tiny\textcircled{d}}=\mu^{-1}b^{\tiny\textcircled{d}}a^{\tiny\textcircled{d}}.$$\end{thm}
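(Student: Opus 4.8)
The plan is to reduce to the reverse order law of Theorem 3.3 by passing to squares, and then to descend from $(ab)^2$ to $ab$ via Lemma 3.4. First I would extract the commutation data: since $\lambda ab^2 = bab = \mu b^2a$ and $\lambda'a^*b^2 = ba^*b = \mu'b^2a^*$, I obtain $ab^2 = \nu b^2 a$ and $a^*b^2 = \nu'b^2a^*$ with $\nu = \mu\lambda^{-1}$ and $\nu' = \mu'(\lambda')^{-1}$; that is, $a$ and $a^*$ commute with $c := b^2$ up to the scalars $\nu$ and $\nu'$. Squaring these, $a^2b^2 = \nu^2 b^2 a^2$ and $(a^2)^*b^2 = (\nu')^2 b^2 (a^2)^*$, so the pair $(a^2,b^2)$ meets the hypotheses of Theorem 3.3 with weights $\nu^2,(\nu')^2$.

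Next I would compute $(ab)^2 = a(bab) = \lambda a^2b^2$ from $bab = \lambda ab^2$. By Lemma 3.4, $a^2,b^2\in\mathcal{A}^{\tiny\textcircled{d}}$ with $(a^2)^{\tiny\textcircled{d}} = (a^{\tiny\textcircled{d}})^2$ and $(b^2)^{\tiny\textcircled{d}} = (b^{\tiny\textcircled{d}})^2$, so Theorem 3.3 gives $a^2b^2\in\mathcal{A}^{\tiny\textcircled{d}}$ with $(a^2b^2)^{\tiny\textcircled{d}} = (b^{\tiny\textcircled{d}})^2(a^{\tiny\textcircled{d}})^2$. Since $\lambda z\in\mathcal{A}^{\tiny\textcircled{d}}$ with $(\lambda z)^{\tiny\textcircled{d}} = \lambda^{-1}z^{\tiny\textcircled{d}}$ for any $z\in\mathcal{A}^{\tiny\textcircled{d}}$ (immediate from the defining relations, the scalars cancelling in $(\lambda z)(\lambda^{-1}z^{\tiny\textcircled{d}}) = zz^{\tiny\textcircled{d}}$), it follows that $(ab)^2\in\mathcal{A}^{\tiny\textcircled{d}}$ and $((ab)^2)^{\tiny\textcircled{d}} = \lambda^{-1}(b^{\tiny\textcircled{d}})^2(a^{\tiny\textcircled{d}})^2$.

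Now applying Lemma 3.4 to $ab$ with $k = 2$, the invertibility of $(ab)^2$ forces $ab\in\mathcal{A}^{\tiny\textcircled{d}}$, and $(ab)^{\tiny\textcircled{d}} = (ab)((ab)^2)^{\tiny\textcircled{d}} = \lambda^{-1}ab(b^{\tiny\textcircled{d}})^2(a^{\tiny\textcircled{d}})^2 = \lambda^{-1}ab^{\tiny\textcircled{d}}(a^{\tiny\textcircled{d}})^2$, using $b(b^{\tiny\textcircled{d}})^2 = b^{\tiny\textcircled{d}}$. This already settles the substantive assertion $ab\in\mathcal{A}^{\tiny\textcircled{d}}$, together with the explicit representative $\lambda^{-1}ab^{\tiny\textcircled{d}}(a^{\tiny\textcircled{d}})^2$ for its generalized core-EP inverse.

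The remaining and genuinely delicate step is to rewrite $\lambda^{-1}ab^{\tiny\textcircled{d}}(a^{\tiny\textcircled{d}})^2$ as a reverse order product. Applying Lemma 3.2 (and its proof) to $a$ and $c = b^2$ yields $a^{\tiny\textcircled{d}}b^2 = \nu^{-1}b^2a^{\tiny\textcircled{d}}$, a symmetric application to $c$ and $a$ yields $(b^{\tiny\textcircled{d}})^2 a = \nu a(b^{\tiny\textcircled{d}})^2$, and Theorem 3.3 for $(a^2,b^2)$ also supplies $(a^{\tiny\textcircled{d}})^2(b^{\tiny\textcircled{d}})^2 = \nu^2(b^{\tiny\textcircled{d}})^2(a^{\tiny\textcircled{d}})^2$. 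The obstacle I anticipate is that the hypotheses control $a$ only against the even power $b^2$, so these identities transport \emph{pairs} of factors $b^{\tiny\textcircled{d}}$ cleanly but not the single factor appearing in $ab^{\tiny\textcircled{d}}(a^{\tiny\textcircled{d}})^2$; it is precisely in commuting $a$ past that lone $b^{\tiny\textcircled{d}}$ that the final constant is pinned down. I would therefore treat fixing the exact scalar in the closed form as the crux, and would cross-check it on the invertible case, where cancelling $b$ gives $ab=\mu ba$ and the reverse order law forces $(ab)^{\tiny\textcircled{d}} = b^{\tiny\textcircled{d}}a^{\tiny\textcircled{d}}$, in order to confirm the reported weight.
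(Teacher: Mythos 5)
Your reduction to Theorem 3.3 via squares and the descent through Lemma 3.4 is exactly the paper's route, and it correctly delivers the substantive claim $ab\in\mathcal{A}^{\tiny\textcircled{d}}$ together with the valid representative $(ab)^{\tiny\textcircled{d}}=\lambda^{-1}ab^{\tiny\textcircled{d}}(a^{\tiny\textcircled{d}})^2$. The genuine gap is that you stop there: the ``in this case'' clause \emph{is} the identification of this element with a scalar multiple of $b^{\tiny\textcircled{d}}a^{\tiny\textcircled{d}}$, and you explicitly leave the commutation of $a$ past the lone $b^{\tiny\textcircled{d}}$ unresolved. That step can be closed with tools already on the table: write $b^{\tiny\textcircled{d}}=b(b^{\tiny\textcircled{d}})^2=b^3[(b^2)^{\tiny\textcircled{d}}]^2$, use $ab^3=(ab^2)b=\lambda^{-1}(bab)b=\lambda^{-1}bab^2$ to get $ab^{\tiny\textcircled{d}}=\lambda^{-1}ba(b^2)^{\tiny\textcircled{d}}$, and then apply Lemma 3.2 to the pair $(b^2,a)$ (admissible because $b^2a=\lambda\mu^{-1}ab^2$ and, adjointing the starred hypothesis, $(b^2)^*a$ is a scalar multiple of $a(b^2)^*$) to obtain $a(b^2)^{\tiny\textcircled{d}}=\lambda\mu^{-1}(b^2)^{\tiny\textcircled{d}}a$, hence $ab^{\tiny\textcircled{d}}=\mu^{-1}b^{\tiny\textcircled{d}}a$.

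Substituting this into your expression gives $(ab)^{\tiny\textcircled{d}}=\lambda^{-1}\mu^{-1}b^{\tiny\textcircled{d}}a(a^{\tiny\textcircled{d}})^2=(\lambda\mu)^{-1}b^{\tiny\textcircled{d}}a^{\tiny\textcircled{d}}$, which carries an extra factor $\lambda^{-1}$ relative to the stated formula; the paper's own final display drops this factor when it silently replaces $ab(b^2)^{\tiny\textcircled{d}}$ by $ba(b^2)^{\tiny\textcircled{d}}$, although these differ by the scalar $\lambda$. Your proposed sanity check in the invertible case, had you executed it rather than deferred it, detects exactly this: cancelling $b$ in $bab=\lambda ab^2=\mu b^2a$ forces $\lambda\mu=1$, and $(ab)^{\tiny\textcircled{d}}=(ab)^{-1}=b^{-1}a^{-1}=b^{\tiny\textcircled{d}}a^{\tiny\textcircled{d}}$ with coefficient $1=(\lambda\mu)^{-1}$, not $\mu^{-1}$. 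A concrete witness is $a=\sigma_x$, $b=\sigma_y$ (Pauli matrices), where $\lambda=\mu=-1$ and $\mu^{-1}b^{-1}a^{-1}=i\sigma_z\neq -i\sigma_z=(ab)^{-1}$. So the step you omitted is both fillable and essential: without it the closed form is unproved, and carrying it out shows the constant in the statement should read $(\lambda\mu)^{-1}$.
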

\begin{proof} In view of Lemma 3.4, $a^2,b^2\in \mathcal{A}^{\tiny\textcircled{d}}$. By hypothesis, we check that
$$\begin{array}{rll}
a^2b^2&=&a(ab^2)=\lambda^{-1}a(bab)=\lambda^{-1}\mu a(b^2a)=\lambda^{-1}\mu (ab^2)a\\
&=&\lambda^{-2}\mu^2(b^2a)a=\lambda^{-2}\mu^2 b^2a^2.
\end{array}$$ Likewise, $(a^2)^*b^2=(\lambda')^{-2}(\mu')^2b^2(a^2)^*$. By virtue of Theorem 3.3, $a^2b^2\in
\mathcal{A}^{\tiny\textcircled{d}}$ and
$$(a^2b^2)^{\tiny\textcircled{d}}=(b^2)^{\tiny\textcircled{d}}(a^2)^{\tiny\textcircled{d}}=\lambda^{2}\mu^{-2}(a^2)^{\tiny\textcircled{d}}(b^2)^{\tiny\textcircled{d}}.$$
Obviously, $(ab)^2=a(bab)=\lambda a^2b^2$, and so $(ab)^2\in \mathcal{A}^{\tiny\textcircled{d}}$. In view of Lemma 3.4,
we have $ab\in \mathcal{A}^{\tiny\textcircled{d}}$. Moreover, we have
$$\begin{array}{rll}
[(ab)^2]^{\tiny\textcircled{d}}&=&[\lambda (a^2b^2)]^{\tiny\textcircled{d}}=[\lambda (a^2b^2)]^d[\lambda (a^2b^2)\lambda^{-1}(a^2b^2)]^d]^{(1,3)}\\
&=&[\lambda (a^2b^2)]^d[(a^2b^2)(a^2b^2)]^d]^{(1,3)}=\lambda^{-1}(a^2b^2)^{\tiny\textcircled{d}}\\
&=&\lambda^{-1}(b^2)^{\tiny\textcircled{d}}(a^2)^{\tiny\textcircled{d}}=\lambda\mu^{-2}(a^2)^{\tiny\textcircled{d}}(b^2)^{\tiny\textcircled{d}}.
\end{array}$$
We easily checks that $$ab^2=\lambda^{-1}\mu b^2a, a^*b^2=(\lambda')^{-1}(\mu')b^2a.$$
In light of Lemma 3.2, $a(b^2)^{\tiny\textcircled{d}}=\lambda\mu^{-1}(b^2a)^{\tiny\textcircled{d}}.$
By using Lemma 3.4 again, we have
$$\begin{array}{rll}
(ab)^{\tiny\textcircled{d}}&=&ab[(ab)^{\tiny\textcircled{d}}]^2\\
&=&ab[(ab)^2]^{\tiny\textcircled{d}}\\
&=&\lambda^{-1}ba(b^2)^{\tiny\textcircled{d}}(a^2)^{\tiny\textcircled{d}}\\
&=&\mu^{-1}b(b^2)^{\tiny\textcircled{d}}a(a^2)^{\tiny\textcircled{d}}\\
&=&\mu^{-1}[b(b^2)^{\tiny\textcircled{d}}][a(a^2)^{\tiny\textcircled{d}}]\\
&=&\mu^{-1}b^{\tiny\textcircled{d}}a^{\tiny\textcircled{d}}.
\end{array}$$ This completes the proof.\end{proof}

Let $\mathcal{A}$ be a Banach *-algebra. Then $M_2(\mathcal{A})$ is a Banach *-algebra with *-transpose as the involution. We come now to consider the generalized EP-inverse of a triangular matrix over $\mathcal{A}$.

\begin{thm} Let $x=\left(
\begin{array}{cc}
a&b\\
0&d
\end{array}
\right)\in M_2(\mathcal{A})$ with $a,d\in \mathcal{A}^{\tiny\textcircled{d}}$. If $$\sum\limits_{i=0}^{\infty}a^ia^{\pi}b(d^d)^{i+2}=0,$$ then $x\in
M_2(\mathcal{A})^{\tiny\textcircled{d}}$ and $$x^{\tiny\textcircled{d}}=\left(
\begin{array}{cc}
a^{\tiny\textcircled{d}}&z\\
0&d^{\tiny\textcircled{d}}
\end{array}
\right),$$ where $z=a^d(a^d)^{\tiny\textcircled{d}}bdd^{\tiny\textcircled{d}}-(a^d)^2b(d^d)^3-a^db(d^d)^4.$\end{thm}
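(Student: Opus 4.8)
The plan is to split the argument into two stages: first promote the diagonal hypotheses to generalized Drazin invertibility of the whole matrix $x$, and then upgrade Drazin to core-EP invertibility through the characterizations of Theorem 1.2. Since $a,d\in\mathcal{A}^{\tiny\textcircled{d}}\subseteq\mathcal{A}^d$, both diagonal entries are g-Drazin invertible, and the block upper triangular $x$ is then g-Drazin invertible with $x^d=\left(\begin{smallmatrix}a^d & w\\0 & d^d\end{smallmatrix}\right)$. Equating the $(1,2)$ entries in the commuting relation $xx^d=x^dx$ forces the Sylvester equation $aw-wd=a^db-bd^d$, whose bounded solution is the classical series $w=\sum_{n\geq 0}(a^d)^{n+2}bd^nd^\pi+a^\pi\sum_{n\geq 0}a^nb(d^d)^{n+2}-a^dbd^d$; convergence is guaranteed because $d^nd^\pi=(dd^\pi)^n$ and $a^na^\pi=(aa^\pi)^n$ with $dd^\pi,aa^\pi$ quasinilpotent. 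The role of the hypothesis $\sum_{i\geq 0}a^ia^\pi b(d^d)^{i+2}=0$ is precisely to annihilate the middle summand, leaving $w=\sum_{n\geq 0}(a^d)^{n+2}bd^nd^\pi-a^dbd^d$.

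Next I would run the core-EP upgrade through Theorem 1.2(8): $x\in M_2(\mathcal{A})^{\tiny\textcircled{d}}$ as soon as $xx^d\in M_2(\mathcal{A})^{(1,3)}$, in which case $x^{\tiny\textcircled{d}}=x^d(xx^d)^{(1,3)}$. Since $a,d\in\mathcal{A}^{\tiny\textcircled{d}}$ give, again by Theorem 1.2(8), that $aa^d,dd^d\in\mathcal{A}^{(1,3)}$, I would assemble a $(1,3)$-inverse of the block matrix $xx^d=\left(\begin{smallmatrix}aa^d & aw+bd^d\\0 & dd^d\end{smallmatrix}\right)$ out of the diagonal $(1,3)$-inverses, verifying the two defining identities $xx^d\,(xx^d)^{(1,3)}\,xx^d=xx^d$ and $\bigl(xx^d(xx^d)^{(1,3)}\bigr)^*=xx^d(xx^d)^{(1,3)}$ block by block. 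I would then multiply out $x^d(xx^d)^{(1,3)}$, reading the diagonal entries as $a^{\tiny\textcircled{d}},d^{\tiny\textcircled{d}}$ (consistent with $a^{\tiny\textcircled{d}}=a^d(aa^d)^{(1,3)}$) and the off-diagonal entry as $z$, collapsing the surviving series by means of the reductions $d^\pi d^d=0$, $aa^da^d=a^d$, $d^\pi(d^d)^{\tiny\textcircled{\#}}=0$ and $dd^{\tiny\textcircled{d}}=d^d(d^d)^{\tiny\textcircled{\#}}$.

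The main obstacle is the off-diagonal bookkeeping. Unlike the diagonal, where the $(1,3)$-data for $a$ and $d$ transfer directly, the $(1,2)$ entry of $xx^d$ is itself an infinite series, so both constructing its $(1,3)$-inverse and simplifying $x^d(xx^d)^{(1,3)}$ require tracking exactly which terms survive after multiplication by the various projections (the non-self-adjoint Drazin idempotents $aa^d$, $dd^d$ and the self-adjoint range projections $a^d(a^d)^{\tiny\textcircled{\#}}$, $d^d(d^d)^{\tiny\textcircled{\#}}$). Keeping the series absolutely convergent throughout and verifying that the remaining pieces assemble into the precise closed form for $z$ (in particular the powers $(d^d)^3$ and $(d^d)^4$) is where essentially all the work sits; once $z$ is pinned down, the defining relations $x^{\tiny\textcircled{d}}=x(x^{\tiny\textcircled{d}})^2$, $(xx^{\tiny\textcircled{d}})^*=xx^{\tiny\textcircled{d}}$ and $\lim_m\|x^m-x^{\tiny\textcircled{d}}x^{m+1}\|^{1/m}=0$ follow mechanically, or may instead be checked directly against the algebraic criteria of Theorem 2.1(2) to avoid the limit.
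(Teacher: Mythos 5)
Your first stage coincides with the paper's: both pass to the standard triangular formula for $x^d$ (the paper cites \cite[Lemma 15.2.1]{C1}) and use the hypothesis to kill the summand $\sum_{i\ge 0}a^ia^{\pi}b(d^d)^{i+2}$, leaving $s=\sum_{i\ge 0}(a^d)^{i+2}bd^id^{\pi}-a^dbd^d$. The divergence, and the gap, is in the upgrade step. You propose to use Theorem 1.2(8) and to ``assemble a $(1,3)$-inverse of $xx^d$ out of the diagonal $(1,3)$-inverses, block by block.'' Compute the $(1,2)$ entry of $xx^d$: it is
$as+bd^d=\sum_{i\ge 0}(a^d)^{i+1}bd^id^{\pi}+a^{\pi}bd^d$,
and the term $a^{\pi}bd^d$ does not vanish under the hypothesis (the hypothesis is a single identity about a sum against powers of $d^d$ starting at $(d^d)^2$, and does not force $a^{\pi}bd^d=0$). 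So the off-diagonal entry of $xx^d$ has a component outside $aa^d\mathcal{A}$, which is exactly the situation in which a block upper-triangular $(1,3)$-inverse built from the diagonal $(1,3)$-inverses fails the self-adjointness condition: the product $xx^d\cdot(xx^d)^{(1,3)}$ acquires a nonzero $(1,2)$ block coming from $a^{\pi}bd^d$ that cannot be made Hermitian against the zero $(2,1)$ block. A $(1,3)$-inverse of $xx^d$ does exist (the theorem is true), but it is not upper triangular of the form you describe, so your plan as stated does not terminate in the formula for $z$; this is a structural obstruction, not mere ``off-diagonal bookkeeping.''

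The paper's proof avoids this by using Theorem 1.2(7) instead of (8): it computes the \emph{core inverse of $x^d$ itself}, not a $(1,3)$-inverse of $xx^d$. The point is that $s$ (unlike $as+bd^d$) satisfies $a^{\pi}s=0$ precisely because the hypothesis removed the $\sum a^ia^{\pi}b(d^d)^{i+2}$ part, i.e.\ $s=aa^ds=a^d(a^d)^{\tiny\textcircled{\#}}s$ after \cite[Lemma 2.4]{XS}; this is the exact condition under which \cite[Theorem 2.5]{XS} gives a triangular core inverse
$(x^d)^{\tiny\textcircled{\#}}$ with off-diagonal entry $-(a^d)^{\tiny\textcircled{\#}}s(d^d)^{\tiny\textcircled{\#}}$, and then $x^{\tiny\textcircled{d}}=(x^d)^2(x^d)^{\tiny\textcircled{\#}}$ yields $z$ by a finite computation (the infinite series collapses since $d^{\pi}(d^d)^{\tiny\textcircled{\#}}=0$). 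If you want to salvage your route, you would need to either prove a triangular $(1,3)$-inverse lemma under the weaker condition actually satisfied by $as+bd^d$, or switch, as the paper does, to the object whose off-diagonal entry lies in $aa^d\mathcal{A}$.
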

\begin{proof} In view of Theorem 1.2, $a,d\in \mathcal{A}^d$ and $a^d,d^d\in \mathcal{A}^{\tiny\textcircled{\#}}$. By virtue of~\cite[Lemma 15.2.1]{C1}, we have $$x^{d}=\left(
\begin{array}{cc}
a^d&s\\
0&d^d
\end{array}
\right),$$ where $s=\sum\limits_{i=0}^{\infty}(a^d)^{i+2}bd^id^{\pi}+\sum\limits_{i=0}^{\infty}a^ia^{\pi}b(d^d)^{i+2}-a^dbd^d.$ By hypothesis, we get
$s=\sum\limits_{i=0}^{\infty}(a^d)^{i+2}bd^id^{\pi}-a^dbd^d.$ Since $(a^d)^{\pi}s=(1-a^da^2a^d)s=a^{\pi}s=\sum\limits_{i=0}^{\infty}a^ia^{\pi}b(d^d)^{i+2}=0$, we have $[1-(a^d)^{\tiny\textcircled{\#}}(a^d)]s=[1-(a^d)^{\#}a^d(a^d)^{(1,3)}a^d]s=(a^d)^{\pi}s=0$. In view of~\cite[Lemma 2.4]{XS},
we have $[1-a^d(a^d)^{\tiny\textcircled{\#}}]s=0$. Then it follows by~\cite[Theorem 2.5]{XS} that
$$(x^d)^{\tiny\textcircled{\#}}=\left(
\begin{array}{cc}
(a^d)^{\tiny\textcircled{\#}}&t\\
0&(d^d)^{\tiny\textcircled{\#}}
\end{array}
\right),$$ where $t=-(a^d)^{\tiny\textcircled{\#}}s(d^d)^{\tiny\textcircled{\#}}.$ Hence,
$t=-(a^d)^{\tiny\textcircled{\#}}[\sum\limits_{i=0}^{\infty}(a^d)^{i+2}bd^id^{\pi}-a^dbd^d](d^d)^{\tiny\textcircled{\#}}
=(a^d)^{\tiny\textcircled{\#}}a^db(d^d)(d^d)^{\tiny\textcircled{\#}}=a^d(a^d)^{\tiny\textcircled{\#}}b(d^d)(d^d)^{\tiny\textcircled{\#}}.$
Then we have $$(x^d)^2=\left(
\begin{array}{cc}
(a^d)^2&w\\
0&(d^d)^2
\end{array}
\right),$$ where $w=\sum\limits_{i=0}^{\infty}(a^d)^{i+3}bd^id^{\pi}-(a^d)^2bd^d-a^db(d^d)^2.$
Therefore $$\begin{array}{rll}
x^{\tiny\textcircled{d}}&=&(x^d)^2(x^d)^{\tiny\textcircled{\#}}\\
&=&\left(
\begin{array}{cc}
(a^d)^2&w\\
0&(d^d)^2
\end{array}
\right)\left(
\begin{array}{cc}
(a^d)^{\tiny\textcircled{\#}}&t\\
0&(d^d)^{\tiny\textcircled{\#}}
\end{array}
\right)\\
&=&\left(
\begin{array}{cc}
a^{\tiny\textcircled{d}}&z\\
0&d^{\tiny\textcircled{d}}
\end{array}
\right),
\end{array}$$ where $$\begin{array}{rll}
z&=&(a^d)^2t+w(d^d)^2\\
&=&(a^d)^2[a^d(a^d)^{\tiny\textcircled{\#}}b(d^d)(d^d)^{\tiny\textcircled{\#}}]-[(a^d)^2bd^d+a^db(d^d)^2](d^d)^2\\
&=&(a^d)^3(a^d)^{\tiny\textcircled{\#}}b(d^d)(d^d)^{\tiny\textcircled{\#}}-(a^d)^2b(d^d)^3-a^db(d^d)^4\\
&=&a^d(a^d)^{\tiny\textcircled{d}}bdd^{\tiny\textcircled{d}}-(a^d)^2b(d^d)^3-a^db(d^d)^4.
\end{array}$$ This completes the proof.\end{proof}

It is very hard to determine the core-EP inverse of a triangular complex matrix (see~\cite{MD2}). As a consequence of Theorem 3.6, we now derive the
following.

\begin{cor} Let $M=\left(
\begin{array}{cc}
A&B\\
0&D
\end{array}
\right)$, $A,B,D\in {\Bbb C}^{n\times n}$. If $$\sum\limits_{i=0}^{i(A)}A^iA^{\pi}B(D^D)^{i+2}=0,$$ then $$M^{\tiny\textcircled{D}}=\left(
\begin{array}{cc}
A^{\tiny\textcircled{D}}&Z\\
0&D^{\tiny\textcircled{D}}
\end{array}
\right),$$ where $Z=A^DA^{\tiny\textcircled{D}}BDD^{\tiny\textcircled{D}}-(A^D)^2B(D^D)^3-A^DB(D^D)^4.$\end{cor}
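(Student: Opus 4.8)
The plan is to deduce Corollary 3.7 directly from Theorem 3.6 by specializing to the Banach *-algebra $\mathcal{A}={\Bbb C}^{n\times n}$, equipped with the operator norm and conjugate transpose as involution $*$. The whole task then reduces to translating the generalized notions of Theorem 3.6 into their classical matrix counterparts. First I would record the standard finite-dimensional coincidences: a matrix is quasinilpotent if and only if it is nilpotent, so the generalized Drazin inverse coincides with the ordinary Drazin inverse and $a^d=A^D$, $d^d=D^D$; consequently the spectral idempotent $a^{\pi}=1-aa^d$ is exactly $A^{\pi}=I-AA^D$.

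Next I would verify that the hypotheses of Theorem 3.6 are automatically fulfilled. Every complex matrix is Drazin invertible, and the limit condition $\lim_{n\to\infty}\|a^n-xa^{n+1}\|^{1/n}=0$ reduces in finite dimensions to $xA^{k+1}=A^k$ with $k=i(A)$; hence the generalized core-EP inverse coincides with the core-EP inverse, exactly as already noted after Theorem 2.1. Therefore every $A,D\in{\Bbb C}^{n\times n}$ belongs to $\mathcal{A}^{\tiny\textcircled{d}}=\mathcal{A}^{\tiny\textcircled{D}}$, with $a^{\tiny\textcircled{d}}=A^{\tiny\textcircled{D}}$ and $d^{\tiny\textcircled{d}}=D^{\tiny\textcircled{D}}$, so the blocks $A,D$ satisfy the standing assumption $a,d\in\mathcal{A}^{\tiny\textcircled{d}}$ of Theorem 3.6 with no extra hypothesis.

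It remains to match the summation condition and read off $M^{\tiny\textcircled{D}}$. Since $A^iA^{\pi}=A^i-A^{i+1}A^D=0$ for all $i\ge i(A)$, the infinite series $\sum_{i=0}^{\infty}a^ia^{\pi}b(d^d)^{i+2}$ appearing in Theorem 3.6 has only finitely many nonzero terms and coincides with $\sum_{i=0}^{i(A)}A^iA^{\pi}B(D^D)^{i+2}$; thus the stated assumption is literally the hypothesis of the theorem. Substituting all these identifications into the conclusion of Theorem 3.6, and using $a^da^{\tiny\textcircled{d}}=A^DA^{\tiny\textcircled{D}}$ and $dd^{\tiny\textcircled{d}}=DD^{\tiny\textcircled{D}}$ in the off-diagonal entry, yields the asserted block form of $M^{\tiny\textcircled{D}}$ together with $Z=A^DA^{\tiny\textcircled{D}}BDD^{\tiny\textcircled{D}}-(A^D)^2B(D^D)^3-A^DB(D^D)^4$.

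The argument is essentially a dictionary translation, so I expect no serious difficulty; the only step demanding genuine care is the truncation of the series, which hinges on the identity $A^iA^{\pi}=0$ for $i\ge i(A)$ and is precisely what converts the infinite sum in Theorem 3.6 into the finite sum of the corollary.
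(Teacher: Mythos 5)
Your proposal is correct and takes essentially the same route as the paper, which simply invokes the coincidence of the generalized core-EP inverse with the core-EP inverse for complex matrices and then applies Theorem 3.6. Your additional details (quasinilpotent $=$ nilpotent in finite dimensions, and the truncation of the series via $A^iA^{\pi}=0$ for $i\geq i(A)$) are exactly the implicit content of the paper's one-line argument.
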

\begin{proof} Since the generalized core-EP inverse and core-EP inverse coincide with each other for a complex matrix, we obtain the result by Theorem
3.6.\end{proof}

\section{Generalized core-EP orders}

This section is devoted to the generalized core-EP order for Banach *-algebra elements.  Let $e=\{ e_1,\cdots ,e_n\}$ be the set of idempotents $e_i$ satisfying $1=e_1+\cdots +e_n$ and $e_ie_j=0 (i\neq j)$. Then for any $x\in \mathcal{A}$, we have $x=\sum\limits_{i,j=1}^{n}e_ixe_j$. Thus $x$ can be represented in the matrix form
$x=(e_ixf_j)_{e\times e}$. When $n=2$ and $p,q=1-p$ are orthogonal idempotents, we may write $x=\left(
\begin{array}{cc}
pxp&px(1-p)\\
(1-p)xp&(1-p)x(1-q)
\end{array}
\right)_p$. An element $a\in \mathcal{A}$ has generalized core-EP decomposition if there exist $x,y\in \mathcal{A}$ such that $$a=x+y, x^*y=yx=0, x\in
\mathcal{A}^{\tiny\textcircled{\#}}, y\in \mathcal{A}^{qnil}.$$ In view of Theorem 1.2,  $a\in \mathcal{A}^{\tiny\textcircled{d}}$ if and only if it has generalized core-EP decomposition. We are now ready to prove:

\begin{lem} Let $a=a_1+a_2$ be the generalized core-EP decomposition of $a$, and let $p=a_1a_1^{\tiny\textcircled{\#}}$. Then
$$a=\left(
\begin{array}{cc}
t&s\\
0&n
\end{array}
\right)_p,$$ where $t\in (p\mathcal{A}p)^{-1}, n\in \big(p^{\pi}\mathcal{A}p^{\pi})^{qnil}.$\end{lem}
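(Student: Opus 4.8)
The plan is to compute the $p$-matrix of $a$ directly from the decomposition $a=a_1+a_2$ --- where $a_1^*a_2=a_2a_1=0$, $a_1\in\mathcal{A}^{\tiny\textcircled{\#}}$ and $a_2\in\mathcal{A}^{qnil}$ --- and then simply read off $t$, $s$, $n$. Throughout I will use that $p=a_1a_1^{\tiny\textcircled{\#}}$ is a projection, together with the standard core-inverse identities $a_1a_1^{\tiny\textcircled{\#}}a_1=a_1$, $a_1^{\tiny\textcircled{\#}}a_1a_1^{\tiny\textcircled{\#}}=a_1^{\tiny\textcircled{\#}}$, $a_1^{\tiny\textcircled{\#}}a_1^2=a_1$, $a_1(a_1^{\tiny\textcircled{\#}})^2=a_1^{\tiny\textcircled{\#}}$, and the range property $a_1^{\tiny\textcircled{\#}}\in\mathcal{A}a_1^*$ that follows from $\mathcal{A}a_1^{\tiny\textcircled{\#}}=\mathcal{A}a_1^*$ in the definition of the core inverse; recall also $p^\pi=1-p$.

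First I would record four corner relations. From $a_1a_1^{\tiny\textcircled{\#}}a_1=a_1$ we get $pa_1=a_1$, hence $p^\pi a_1=0$. Writing $a_1^{\tiny\textcircled{\#}}=wa_1^*$ for a suitable $w\in\mathcal{A}$ and invoking $a_1^*a_2=0$ gives $pa_2=a_1wa_1^*a_2=0$. Finally $a_2p=(a_2a_1)a_1^{\tiny\textcircled{\#}}=0$ by $a_2a_1=0$. Since $pa_2=a_2p=0$, these also force $a_2=p^\pi a_2p^\pi$.

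Next I would assemble the block form. Because $p^\pi a_1=0$ and $a_2p=0$, the lower-left block satisfies $p^\pi ap=p^\pi a_1p+p^\pi a_2p=0$, so $a$ is upper triangular in the $p$-matrix. Reading off the remaining entries with the relations just obtained,
$$t=pap=pa_1p=a_1p=a_1^2a_1^{\tiny\textcircled{\#}},\quad s=pa(1-p)=pa_1(1-p),\quad n=p^\pi a p^\pi=p^\pi a_2p^\pi=a_2.$$
It then remains to check the two memberships. For $t\in(p\mathcal{A}p)^{-1}$, I would first note $a_1^{\tiny\textcircled{\#}}\in p\mathcal{A}p$, since $pa_1^{\tiny\textcircled{\#}}=a_1(a_1^{\tiny\textcircled{\#}})^2=a_1^{\tiny\textcircled{\#}}$ and $a_1^{\tiny\textcircled{\#}}p=a_1^{\tiny\textcircled{\#}}a_1a_1^{\tiny\textcircled{\#}}=a_1^{\tiny\textcircled{\#}}$; then $ta_1^{\tiny\textcircled{\#}}=a_1^2(a_1^{\tiny\textcircled{\#}})^2=a_1a_1^{\tiny\textcircled{\#}}=p$ and $a_1^{\tiny\textcircled{\#}}t=(a_1^{\tiny\textcircled{\#}}a_1^2)a_1^{\tiny\textcircled{\#}}=a_1a_1^{\tiny\textcircled{\#}}=p$, exhibiting $a_1^{\tiny\textcircled{\#}}$ as the inverse of $t$ in the unital Banach subalgebra $p\mathcal{A}p$. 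For $n\in(p^\pi\mathcal{A}p^\pi)^{qnil}$, I would use that $n=a_2\in\mathcal{A}^{qnil}$ lies in the closed unital subalgebra $p^\pi\mathcal{A}p^\pi$, on which the powers and norm coincide with those in $\mathcal{A}$, so $\lim_{n\to\infty}||n^n||^{\frac{1}{n}}=0$ holds in the corner as well.

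I expect no genuinely hard step; the argument is essentially bookkeeping with the core-inverse identities. The one point needing care is the identity $pa_2=0$, which must route the hypothesis $a_1^*a_2=0$ through the range condition $a_1^{\tiny\textcircled{\#}}\in\mathcal{A}a_1^*$ --- this is precisely where the $*$-orthogonality of the decomposition is used, and one cannot substitute the unavailable relation $a_1a_2=0$. A secondary, minor subtlety is transferring quasinilpotency to $p^\pi\mathcal{A}p^\pi$, which is legitimate because it is a closed unital subalgebra on which the norm restricts.
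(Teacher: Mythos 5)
Your proposal is correct and follows essentially the same route as the paper's proof: both derive the corner relations $p^{\pi}a_1=0$, $pa_2=0$, $a_2p=0$ from the generalized core-EP decomposition and read off the block entries. The only cosmetic differences are that the paper obtains $pa_2=0$ by writing $p=p^*=(a_1^{\tiny\textcircled{\#}})^*a_1^*$ and using $a_1^*a_2=0$, whereas you route it through $a_1^{\tiny\textcircled{\#}}\in\mathcal{A}a_1^*$, and that you explicitly verify $t$ is invertible in $p\mathcal{A}p$ (with inverse $a_1^{\tiny\textcircled{\#}}$) and that quasinilpotency passes to the corner algebra, two points the paper leaves implicit.
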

\begin{proof} Obviously, $p^2=p=p^*$. It is easy to verify that $p^{\pi}a_1=(1-a_1a_1^{\tiny\textcircled{\#}})a_1=0$, and so
$a_1=\left(
\begin{array}{cc}
t&s\\
0&0
\end{array}
\right)_p.$ Moreover, we have $pa_2=a_1a_1^{\tiny\textcircled{\#}}a_2=(a_1^{\tiny\textcircled{\#}})^*(a_1)^*a_2=0$ and
$p^{\pi}a_2p=(1-a_1a_1^{\tiny\textcircled{\#}})a_2a_1a_1^{\tiny\textcircled{\#}}=0$. Hence, $a_2=\left(
\begin{array}{cc}
0&0\\
0&n
\end{array}
\right)_p,$ where $n=p^{\pi}a_2p^{\pi}\in \mathcal{A}^{qnil}$. Therefore $a=a_1+a_2=\left(
\begin{array}{cc}
t&s\\
0&n
\end{array}
\right)_p,$ as asserted.\end{proof}

\begin{lem} Let $a=a_1+a_2$ be the generalized core-EP decomposition of $a$, and let $p=aa^{\tiny\textcircled{d}}$. Then
$$a=\left(
\begin{array}{cc}
t_1&t_2\\
0&a_2
\end{array}
\right)_p,$$ where $t_1\in (p\mathcal{A}p)^{-1}, a_2\in [(1-p)\mathcal{A}(1-p)]^{qnil}.$ For any $b\in \mathcal{A}^{\tiny\textcircled{d}}$, we have
$$a\leq^{{\tiny\textcircled{d}}}b~\mbox{if and only if} ~b=\left(
\begin{array}{cc}
t_1&t_2\\
0&c
\end{array}
\right)_p,$$ where $c\in (1-p)\mathcal{A}(1-p).$\end{lem}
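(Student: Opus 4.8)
The plan is to pin down the projection $p=aa^{\tiny\textcircled{d}}$ in terms of the generalized core-EP decomposition $a=a_1+a_2$, reduce the first displayed assertion to Lemma 4.1, and then extract the order relation from a $2\times 2$ matrix computation over the corner algebras $p\mathcal{A}p$ and $(1-p)\mathcal{A}(1-p)$. First I would verify the identification $a^{\tiny\textcircled{d}}=a_1^{\tiny\textcircled{\#}}$. Writing $a_1^{\tiny\textcircled{\#}}=a_1u$ and $a_1^{\tiny\textcircled{\#}}=za_1^{*}$ (from $a_1^{\tiny\textcircled{\#}}\mathcal{A}=a_1\mathcal{A}$ and $\mathcal{A}a_1^{\tiny\textcircled{\#}}=\mathcal{A}a_1^{*}$), the decomposition relations $a_2a_1=0$ and $a_1^{*}a_2=0$ give $a_2a_1^{\tiny\textcircled{\#}}=0$ and $a_1^{\tiny\textcircled{\#}}a_2=0$. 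One then checks $a(a_1^{\tiny\textcircled{\#}})^{2}=a_1^{\tiny\textcircled{\#}}$ and $(aa_1^{\tiny\textcircled{\#}})^{*}=aa_1^{\tiny\textcircled{\#}}$, and, using $a^{n}=\sum_{i=0}^{n}a_1^{i}a_2^{n-i}$ (valid because $a_2a_1=0$) together with the core-inverse identity $a_1^{\tiny\textcircled{\#}}a_1^{2}=a_1$, one finds $a^{n}-a_1^{\tiny\textcircled{\#}}a^{n+1}=(1-a_1^{\tiny\textcircled{\#}}a_1)a_2^{n}$, whose $n$-th root norm tends to $0$ since $a_2\in\mathcal{A}^{qnil}$. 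By uniqueness of the generalized core-EP inverse this yields $a^{\tiny\textcircled{d}}=a_1^{\tiny\textcircled{\#}}$, whence $p=aa^{\tiny\textcircled{d}}=a_1a_1^{\tiny\textcircled{\#}}$. This is exactly the projection appearing in Lemma 4.1, so its decomposition delivers the first displayed form of $a$, with $t_1=pap\in(p\mathcal{A}p)^{-1}$ and $t_2=pa(1-p)$; the lower-right corner is $a_2$ itself, since $pa_2=0$ and $(1-p)a_2p=0$ force $a_2=(1-p)a_2(1-p)\in[(1-p)\mathcal{A}(1-p)]^{qnil}$.

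Next I would locate $a^{\tiny\textcircled{d}}$ inside the $p$-matrix picture. Putting $x=a^{\tiny\textcircled{d}}$, the definition gives $ax^{2}=x$ and Theorem 2.1 gives $xax=x$; since $p=ax$, this yields $px=ax^{2}=x$ and $xp=xax=x$, so $a^{\tiny\textcircled{d}}=pa^{\tiny\textcircled{d}}p$ sits in the top-left corner. Because $t_1a^{\tiny\textcircled{d}}=pa\,a^{\tiny\textcircled{d}}=p\cdot p=p$ is the identity of $p\mathcal{A}p$ and $t_1$ is invertible there, we get $a^{\tiny\textcircled{d}}=t_1^{-1}$, i.e. $a^{\tiny\textcircled{d}}=\left(\begin{smallmatrix}t_1^{-1}&0\\0&0\end{smallmatrix}\right)_p$. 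Consequently $aa^{\tiny\textcircled{d}}=\left(\begin{smallmatrix}p&0\\0&0\end{smallmatrix}\right)_p$ and $a^{\tiny\textcircled{d}}a=\left(\begin{smallmatrix}p&t_1^{-1}t_2\\0&0\end{smallmatrix}\right)_p$.

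For the order statement, write $b=(b_{ij})_p$ in the same idempotent frame. Forming $ba^{\tiny\textcircled{d}}=\left(\begin{smallmatrix}b_{11}t_1^{-1}&0\\b_{21}t_1^{-1}&0\end{smallmatrix}\right)_p$ and $a^{\tiny\textcircled{d}}b=\left(\begin{smallmatrix}t_1^{-1}b_{11}&t_1^{-1}b_{12}\\0&0\end{smallmatrix}\right)_p$ and comparing with the two displays above, the defining equations $aa^{\tiny\textcircled{d}}=ba^{\tiny\textcircled{d}}$ and $a^{\tiny\textcircled{d}}a=a^{\tiny\textcircled{d}}b$ become $b_{11}t_1^{-1}=p$, $b_{21}t_1^{-1}=0$, $t_1^{-1}b_{11}=p$ and $t_1^{-1}b_{12}=t_1^{-1}t_2$. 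Left- or right-multiplying by $t_1$, these are equivalent to $b_{11}=t_1$, $b_{21}=0$, $b_{12}=t_2$, while $b_{22}=c$ stays unconstrained in $(1-p)\mathcal{A}(1-p)$. This is precisely the asserted form of $b$, and since every step is reversible the converse follows at once; note that the hypothesis $b\in\mathcal{A}^{\tiny\textcircled{d}}$ serves only to place $b$ in the domain of the order, as the two order equations involve $a^{\tiny\textcircled{d}}$ alone.

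The matrix bookkeeping in the last two paragraphs is routine; the one genuinely substantive point is the identification $a^{\tiny\textcircled{d}}=a_1^{\tiny\textcircled{\#}}$, that is, verifying the quasinilpotent (limit) clause of the definition so that the projection $p=aa^{\tiny\textcircled{d}}$ of this lemma really coincides with the projection $a_1a_1^{\tiny\textcircled{\#}}$ of Lemma 4.1. If one prefers to avoid reproving it, this identity may be quoted from the construction underlying Theorem 1.2; otherwise I expect the estimate on $(1-a_1^{\tiny\textcircled{\#}}a_1)a_2^{n}$ above to be the crux of a self-contained argument.
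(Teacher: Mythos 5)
Your proof is correct and follows essentially the same route as the paper: reduce the block form of $a$ to Lemma 4.1 and then read off the order relation from a $2\times 2$ Peirce-corner computation with $a^{\tiny\textcircled{d}}$ sitting in the $(1,1)$ corner. You are in fact more careful than the paper on two points it leaves implicit, namely the identification $a^{\tiny\textcircled{d}}=a_1^{\tiny\textcircled{\#}}$ (so that $p=aa^{\tiny\textcircled{d}}$ really is the projection $a_1a_1^{\tiny\textcircled{\#}}$ of Lemma 4.1, via your estimate $a^n-a_1^{\tiny\textcircled{\#}}a^{n+1}=(1-a_1^{\tiny\textcircled{\#}}a_1)a_2^n$) and the verification that the $(1,1)$ and $(1,2)$ entries of $b$ equal $t_1$ and $t_2$, where the paper only computes $(1-p)bp=0$ explicitly.
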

\begin{proof} By using Lemma 4.1, we have $a=\left(
\begin{array}{cc}
t_1&t_2\\
0&a_2
\end{array}
\right)_p,$ where $t_1\in (p\mathcal{A}p)^{-1},$ $a_2\in [(1-p)\mathcal{A}(1-p)]^{qnil}.$ Let $b\in \mathcal{A}^{\tiny\textcircled{d}}$. If
$a\leq^{{\tiny\textcircled{d}}}b$, then
$aa^{{\tiny\textcircled{d}}}=ba^{{\tiny\textcircled{d}}}$ and
$a^{{\tiny\textcircled{d}}}a=a^{{\tiny\textcircled{d}}}b$. We verify that
$$(1-p)bp=(1-aa^{\tiny\textcircled{d}})ba^{\tiny\textcircled{d}}a=(1-aa^{\tiny\textcircled{d}})aa^{\tiny\textcircled{d}}a=0.$$
Then $b=\left(
\begin{array}{cc}
t_1&t_2\\
0&c
\end{array}
\right)_p,$ where $c\in (1-p)\mathcal{A}(1-p).$

Conversely, assume that $a$ and $b$ has the matrix form above. Then $$a=\left(
\begin{array}{cc}
t_1&t_2\\
0&a_2
\end{array}
\right)_p, a^{\tiny\textcircled{d}}=\left(
\begin{array}{cc}
a^{\tiny\textcircled{d}}&0\\
0&0
\end{array}
\right)_p,$$ we directly verify that $aa^{\tiny\textcircled{d}}=ba^{\tiny\textcircled{d}}$ and
$a^{\tiny\textcircled{d}}a=a^{\tiny\textcircled{d}}b$. Accordingly, $a\leq^{\tiny\textcircled{d}}b$.\end{proof}

\begin{lem} Let $a,b\in \mathcal{A}^{\tiny\textcircled{d}}$ and $p=aa^{\tiny\textcircled{d}}$. Then
$(1-aa^{\tiny\textcircled{d}})b(1-aa^{\tiny\textcircled{d}})\in \mathcal{A}^{\tiny\textcircled{d}}$.\end{lem}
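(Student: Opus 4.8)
The plan is to reduce the statement to generalized core-EP invertibility inside a corner algebra, and then to build a generalized core-EP decomposition of the compressed element. First I would record that $p=aa^{\tiny\textcircled{d}}$ is genuinely a projection: the defining relation $(aa^{\tiny\textcircled{d}})^{*}=aa^{\tiny\textcircled{d}}$ gives self-adjointness, while $a^{\tiny\textcircled{d}}aa^{\tiny\textcircled{d}}=a^{\tiny\textcircled{d}}$ (Theorem 2.1) gives $(aa^{\tiny\textcircled{d}})^{2}=aa^{\tiny\textcircled{d}}$. Writing $q=1-p$ and $c=qbq$, note that $c=qcq$ lives in the unital Banach *-algebra $\mathcal{B}=q\mathcal{A}q$, whose involution and norm are inherited from $\mathcal{A}$. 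Since the three defining identities of Definition 1.1 together with the notion of quasinilpotency transfer verbatim between $\mathcal{B}$ and $\mathcal{A}$, it suffices to produce $y\in\mathcal{B}$ with $y=cy^{2}$, $(cy)^{*}=cy$ and $\lim_{n}\|c^{n}-yc^{n+1}\|^{1/n}=0$; equivalently, by Theorem 1.2 it is enough to show $c\in\mathcal{A}^{d}$ together with $c^{d}\in\mathcal{A}^{(1,3)}$.

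Next I would try to manufacture such data from the generalized core-EP decomposition of $b$. By Theorem 1.2(2) write $b=b_{1}+b_{2}$ with $b_{1}\in\mathcal{A}^{\tiny\textcircled{\#}}$, $b_{2}\in\mathcal{A}^{qnil}$ and $b_{1}^{*}b_{2}=b_{2}b_{1}=0$, and propose the candidate splitting
$$c=qb_{1}q+qb_{2}q=:c_{1}+c_{2}.$$
The goal is then to verify the four hypotheses of Theorem 1.2(2) for $c_{1},c_{2}$, namely $c_{1}\in\mathcal{A}^{\tiny\textcircled{\#}}$, $c_{2}\in\mathcal{A}^{qnil}$, $c_{1}^{*}c_{2}=0$ and $c_{2}c_{1}=0$, after which $c\in\mathcal{A}^{\tiny\textcircled{d}}$ is immediate. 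A cleaner variant, better suited to the surrounding results, is to first pass to the $2\times 2$ form of $b$ relative to $p$ as in Lemma 4.1 and Lemma 4.2, so that $c$ appears as the lower-right corner of a block-triangular element, and then to read off its generalized core-EP inverse from the triangular computation carried out in Theorem 3.6.

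The hard part is the quasinilpotent block: controlling $c_{2}=qb_{2}q$, equivalently proving $c\in\mathcal{A}^{d}$. Compression by a projection does not preserve quasinilpotency in general, so $\|(qb_{2}q)^{n}\|^{1/n}$ need not vanish, and a generalized Drazin invertible element can even acquire spectrum accumulating at $0$ after compression; hence this step cannot be purely formal and must exploit the interaction between the specific projection $p=aa^{\tiny\textcircled{d}}$ and $b$. The most reliable way to make it go through is to secure the triangularity $qbp=0$, i.e.\ the block form $b=\left(\begin{smallmatrix} t_{1} & t_{2}\\ 0 & c\end{smallmatrix}\right)_{p}$; under that form the generalized Drazin inverse of $b$ splits along the diagonal, forcing $c\in\big(q\mathcal{A}q\big)^{d}$, and the $(1,3)$-regularity of $c^{d}$ can then be extracted from that of $b^{d}$ exactly as in the proof of Theorem 3.6. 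Establishing (or building in) this compatibility between $p$ and $b$ is, I expect, the decisive obstacle, since without it neither the orthogonality $c_{1}^{*}c_{2}=c_{2}c_{1}=0$ nor the quasinilpotency of $c_{2}$ can be guaranteed.
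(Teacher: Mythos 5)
You have not closed the argument, and the step you single out as ``the decisive obstacle'' --- securing the triangularity $(1-p)bp=0$ --- is in fact unobtainable from the stated hypotheses, so your diagnosis is correct: as literally stated, with $a,b\in\mathcal{A}^{\tiny\textcircled{d}}$ arbitrary, the lemma is false. Take $\mathcal{A}=\mathcal{B}(\ell^2({\Bbb Z}))$, let $a=p$ be the projection onto the span of the basis vectors with negative index (so $a^{\tiny\textcircled{d}}=p$ and $aa^{\tiny\textcircled{d}}=p$), and let $b$ be the bilateral shift (invertible, hence in $\mathcal{A}^{\tiny\textcircled{d}}$). Then $(1-p)b(1-p)$ acts on $(1-p)\ell^2$ as the unilateral shift, whose spectrum is the closed unit disc; thus $0$ is not isolated in the spectrum of $(1-p)b(1-p)$, so it is not even generalized Drazin invertible and a fortiori not in $\mathcal{A}^{\tiny\textcircled{d}}$ by Theorem 1.2. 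The lemma is only invoked in Theorem 4.4 under the hypothesis $a\leq^{\tiny\textcircled{d}}b$, and the paper's own proof smuggles that hypothesis in: its Step 1 cites Lemma 4.2 to put $b$ in the form $\left(\begin{smallmatrix}t_1&t_2\\ 0&c\end{smallmatrix}\right)_p$, which by that very lemma is equivalent to $a\leq^{\tiny\textcircled{d}}b$, and its Step 2 uses $aa^{\tiny\textcircled{d}}=aa^{\tiny\textcircled{d}}bb^{\tiny\textcircled{d}}=bb^{\tiny\textcircled{d}}aa^{\tiny\textcircled{d}}$, which is likewise not available for arbitrary $a,b$. So the missing ingredient is a missing hypothesis, not a missing idea on your part; you should add $a\leq^{\tiny\textcircled{d}}b$ to the statement before attempting a proof.

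Even granting that hypothesis, your plan diverges from the paper's and still has a hole at its centre: compressing the generalized core-EP decomposition $b=b_1+b_2$ to $qb_1q+qb_2q$ requires you to show that $qb_2q$ is quasinilpotent and that the orthogonality relations survive compression, and the decomposition of $b$ is adapted to the projection $bb^{\tiny\textcircled{d}}$, not to $p$, so none of this is automatic; you offer no argument for it. The paper sidesteps the decomposition entirely. It writes $b^{\tiny\textcircled{d}}=\left(\begin{smallmatrix}x_1&x_2\\ x_3&x_4\end{smallmatrix}\right)_p$, takes $y=(1-p)b^{\tiny\textcircled{d}}(1-p)$ as an explicit candidate for $c^{\tiny\textcircled{d}}$ where $c=(1-p)b(1-p)$, and verifies the three identities of Definition 1.1 directly: $(cy)^*=cy$ by comparing corner entries of $(bb^{\tiny\textcircled{d}})^*=bb^{\tiny\textcircled{d}}$; $cy^2=y$ from the commutation of $1-p$ with $bb^{\tiny\textcircled{d}}$; and the limit condition by first deducing $x_3=0$ from $\lim_m\|b^m-b^{\tiny\textcircled{d}}b^{m+1}\|^{1/m}=0$ and the invertibility of $t_1$ in $p\mathcal{A}p$, which gives $c^m=(1-p)b^m$ and lets the norm estimate for $b$ transfer to $c$. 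This direct verification is what makes the quasinilpotent part tractable, and is the route you should follow after repairing the hypothesis.
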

\begin{proof} Step 1.  Write $b^{\tiny\textcircled{d}}=\left(
\begin{array}{cc}
x_1&x_2\\
x_3&x_4
\end{array}
\right)_p.$ Construct $t_1,t_2$ as in Lemma 4.2, it follows by Lemma 4.2 that $b=\left(
\begin{array}{cc}
t_1&t_2\\
0&c
\end{array}
\right)_p$, where $c\in (1-p)\mathcal{A}(1-p).$ Then $$bb^{\tiny\textcircled{d}}=\left(
\begin{array}{cc}
t_1&t_2\\
0&c
\end{array}
\right)_p\left(
\begin{array}{cc}
x_1&x_2\\
x_3&x_4
\end{array}
\right)_p=\left(
\begin{array}{cc}
*&*\\
*&cx_4
\end{array}
\right)_p.$$ Likewise, we have $$(b^{\tiny\textcircled{d}})^*b^*=\left(
\begin{array}{cc}
*&*\\
*&(x_4)^*c^*
\end{array}
\right)_p.$$ Since $(bb^{\tiny\textcircled{d}})^*=bb^{\tiny\textcircled{d}}$, we have
$cx_4=(x_4)^*c^*=(cx_4)^*.$ Clearly, $cx_4=(1-p)b(1-p)b^{\tiny\textcircled{d}}(1-p)$. Hence,
$[(1-p)b(1-p)b^{\tiny\textcircled{d}}(1-p)]^*=(1-p)b(1-p)b^{\tiny\textcircled{d}}(1-p).$

Step 2. Obviously, $aa^{\tiny\textcircled{d}}=aa^{\tiny\textcircled{d}}bb^{\tiny\textcircled{d}}=bb^{\tiny\textcircled{d}}aa^{\tiny\textcircled{d}}.$
Then $(1-p)bb^{\tiny\textcircled{d}}=bb^{\tiny\textcircled{d}}(1-p)$.
Thus $$\begin{array}{rll}
&(1-p)b(1-p)[(1-p)b^{\tiny\textcircled{d}}(1-p)]^2\\
=&[(1-p)b(1-p)b^{\tiny\textcircled{d}}(1-p)][b^{\tiny\textcircled{d}}(1-p)]\\
=&(cx_4)[b^{\tiny\textcircled{d}}(1-p)]\\
=&[(1-p)bb^{\tiny\textcircled{d}}(1-p)][b^{\tiny\textcircled{d}}(1-p)]\\
=&(1-p)b(b^{\tiny\textcircled{d}})^2(1-p)\\
=&(1-p)b^{\tiny\textcircled{d}}(1-p).
\end{array}$$

Step 3. For any $m\in {\Bbb N}$, we have
$b^m=\left(
\begin{array}{cc}
t_1^m&*\\
0&c^m
\end{array}
\right)_p,$ and then $$b^m-b^{\tiny\textcircled{d}}b^{m+1}=\left(
\begin{array}{cc}
*&*\\
x_3t_1^{m+1}&*
\end{array}
\right)_p.$$ Since $\lim_{m\to \infty}||b^m-b^{\tiny\textcircled{d}}b^{m+1}||^{\frac{1}{m}}=0$, we deduce that
$\lim_{m\to \infty}||x_3t_1^{m+1}||^{\frac{1}{m}}=0$. Obviously, $a^{\tiny\textcircled{d}}t_1=t_1a^{\tiny\textcircled{d}}=aa^{\tiny\textcircled{d}}$.
Then $\lim_{m\to \infty}||x_3||^{\frac{1}{m}}=0$. This implies that $x_3=0$, and then  $(1-p)b^{\tiny\textcircled{d}}p=0$.
For any $m\in {\Bbb N}$, we have $b^m=\left(
\begin{array}{cc}
t_1^m&*\\
0&c^m
\end{array}
\right)_p$; hence, $(1-p)b^mp=0$. We infer that $[(1-p)b(1-p)]^{m}=(1-p)b^m$ and $[(1-p)b(1-p)]^{m+1}=(1-p)b^{m+1}$.
Thus, we have $$[(1-p)b^{\tiny\textcircled{d}}(1-p)][(1-p)b(1-p)]^{m+1}=(1-p)b^{\tiny\textcircled{d}}b^{m+1}.$$
This implies that $$\begin{array}{rl}
&||[(1-p)b(1-p)]^m-[(1-p)b^{\tiny\textcircled{d}}(1-p)][(1-p)b(1-p)]^{m+1}||^{\frac{1}{m}}\\
\leq &||1-p||^{\frac{1}{m}}||b^m-b^{\tiny\textcircled{d}}b^{m+1}||^{\frac{1}{m}}.
\end{array}$$ Therefore $$\lim_{m\to \infty}||[(1-p)b(1-p)]^m-[(1-p)b^{\tiny\textcircled{d}}(1-p)][(1-p)b(1-p)]^{m+1}||^{\frac{1}{m}}=0.$$
Accordingly, $(1-aa^{\tiny\textcircled{d}})b(1-aa^{\tiny\textcircled{d}})\in \mathcal{A}^{\tiny\textcircled{d}}$, as asserted.\end{proof}

We have at our disposal all the information necessary to prove the following.

\begin{thm} Let $a,b\in \mathcal{A}^{\tiny\textcircled{d}}$. Then the following are equivalent:\end{thm}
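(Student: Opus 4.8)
The plan is to reduce every candidate condition to a statement about the $2\times 2$ block matrices relative to the idempotent $p=aa^{\tiny\textcircled{d}}$, exploiting the decomposition already set up in Lemmas 4.1--4.3. First I would invoke the representation from Lemma 4.2, namely
$$a=\left(\begin{array}{cc} t_1 & t_2 \\ 0 & a_2 \end{array}\right)_p,$$
with $t_1\in(p\mathcal{A}p)^{-1}$ and $a_2\in[(1-p)\mathcal{A}(1-p)]^{qnil}$, and record the matching form of the inverse. From $a^{\tiny\textcircled{d}}=a(a^{\tiny\textcircled{d}})^2=(aa^{\tiny\textcircled{d}})a^{\tiny\textcircled{d}}=pa^{\tiny\textcircled{d}}$ and from $a^{\tiny\textcircled{d}}aa^{\tiny\textcircled{d}}=a^{\tiny\textcircled{d}}$ (so $a^{\tiny\textcircled{d}}p=a^{\tiny\textcircled{d}}$) one gets $a^{\tiny\textcircled{d}}=pa^{\tiny\textcircled{d}}p$, and comparing with $aa^{\tiny\textcircled{d}}=p$ forces the $(1,1)$ corner to be $t_1^{-1}$, i.e.
$$a^{\tiny\textcircled{d}}=\left(\begin{array}{cc} t_1^{-1} & 0 \\ 0 & 0 \end{array}\right)_p.$$
With these two block forms fixed, the defining pair $aa^{\tiny\textcircled{d}}=ba^{\tiny\textcircled{d}}$ and $a^{\tiny\textcircled{d}}a=a^{\tiny\textcircled{d}}b$ translate directly into conditions on the entries of $b$.

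Next I would run the equivalences through Lemma 4.2. The implication from $a\leq^{\tiny\textcircled{d}}b$ to the block shape is exactly that lemma: it forces $b=\left(\begin{array}{cc} t_1 & t_2 \\ 0 & c \end{array}\right)_p$ with $c\in(1-p)\mathcal{A}(1-p)$, and conversely this shape returns the order. For the remaining, more algebraic characterizations I would compute $ba^{\tiny\textcircled{d}}$, $a^{\tiny\textcircled{d}}b$, $pb$, $bp$ and $bb^{\tiny\textcircled{d}}$ block by block and match entries: conditions of the type ``$p$ commutes with $b$'' or ``$aa^{\tiny\textcircled{d}}b=baa^{\tiny\textcircled{d}}$'' become the vanishing of a single off-diagonal corner, and each is then seen to be equivalent to the upper-triangular form above. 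Because $a^{\tiny\textcircled{d}}$ lives entirely in the $p$-corner, these comparisons are essentially bookkeeping once the block dictionary is in place.

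The step I expect to be the main obstacle is any characterization phrased directly through $b^{\tiny\textcircled{d}}$ — for instance an identity of the form $b^{\tiny\textcircled{d}}=a^{\tiny\textcircled{d}}+[(1-p)b(1-p)]^{\tiny\textcircled{d}}$. Establishing it requires first that the complementary corner $(1-p)b(1-p)$ be generalized core-EP invertible, which is precisely Lemma 4.3, and then a verification of the three defining relations of Definition 1.1 for the proposed sum. Here I would use $aa^{\tiny\textcircled{d}}=aa^{\tiny\textcircled{d}}bb^{\tiny\textcircled{d}}=bb^{\tiny\textcircled{d}}aa^{\tiny\textcircled{d}}$ (Step 2 of Lemma 4.3), so that $p$ commutes with $bb^{\tiny\textcircled{d}}$ and the candidate inverse splits cleanly along $p$ and $1-p$; the Hermitian condition $(a x)^{*}=ax$ and the quasinilpotent limit then reduce, corner by corner, to the facts already verified for $a$ on the $p$-part and for $(1-p)b(1-p)$ on the complementary part, after which uniqueness of the generalized core-EP inverse closes the argument.
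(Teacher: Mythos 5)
There is a genuine gap: your plan never reaches the actual content of the theorem. The equivalence to be proved is between the order $a\leq^{\tiny\textcircled{d}}b$ and a \emph{three-by-three} block representation of $a$ and $b$ relative to a system of idempotents $e=\{e_1,e_2,1-e_1-e_2\}$ with $e_1=aa^{\tiny\textcircled{d}}$ and a second projection $e_2$, in which the middle diagonal entry $t_3$ of $b$ is invertible in $e_2\mathcal{A}e_2$ and the last one $t_5$ is quasinilpotent. Everything you propose stays at the level of the $2\times 2$ decomposition along $p=aa^{\tiny\textcircled{d}}$, which is exactly Lemma 4.2 and is already proved in the paper; restating it does not prove Theorem 4.4. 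The step you are missing is the construction of $e_2$: one must first use Lemma 4.3 to see that the corner $c=(1-p)b(1-p)$ lies in $\mathcal{A}^{\tiny\textcircled{d}}$, then set $q=cc^{\tiny\textcircled{d}}$ and apply Lemma 4.1 to $c$ \emph{inside the corner algebra} $(1-p)\mathcal{A}(1-p)$ to split that corner into an invertible part $t_3\in(q\mathcal{A}q)^{-1}$ and a quasinilpotent part $t_5$, check $pq=qp=0$, and only then assemble the $3\times 3$ forms of $a$ and $b$ over $e=\{p,q,1-p-q\}$. In your sketch, Lemma 4.3 is invoked only to support a formula $b^{\tiny\textcircled{d}}=a^{\tiny\textcircled{d}}+[(1-p)b(1-p)]^{\tiny\textcircled{d}}$, which is not one of the conditions of the theorem and is not needed for it.

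A secondary issue is that the ``more algebraic characterizations'' you propose to run through the block dictionary (``$p$ commutes with $b$'', ``$aa^{\tiny\textcircled{d}}b=baa^{\tiny\textcircled{d}}$'') are not among the equivalent conditions of this theorem, so proving them equivalent to the $2\times 2$ triangular form would not close the argument either. Your block computations that are present are correct as far as they go --- in particular the identification $a^{\tiny\textcircled{d}}=\left(\begin{smallmatrix} t_1^{-1} & 0\\ 0 & 0\end{smallmatrix}\right)_p$ and the verification of $(2)\Rightarrow(1)$ by multiplying block matrices agree with what the paper does --- but without the refinement of the $(1-p)$-corner via $q=cc^{\tiny\textcircled{d}}$ the forward implication of the stated equivalence is not established.
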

\begin{enumerate}
\item [(1)] $a\leq^{{\tiny\textcircled{d}}}b$.
\vspace{-.5mm}
\item [(2)] There exist $e_1=aa^{\tiny\textcircled{d}}$ and $e_2=e_2^*=e_2^2$ such that
$$a=\left(
\begin{array}{ccc}
t_1&s_1&s_2\\
0&n_1&n_2\\
0&n_3&n_4
\end{array}
\right)_{e\times e}, b=\left(
\begin{array}{ccc}
t_1&s_1&s_2\\
0&t_3&t_4\\
0&0&t_5
\end{array}
\right)_{e\times e},$$ where $e=\{ e_1,e_2,1-e_1-e_2\}, t_1\in (e_1\mathcal{A}e_1)^{-1}, t_3\in (e_2\mathcal{A}e_2)^{-1}$ and $n_1+n_2+n_3+n_4,t_5\in
\mathcal{A}^{qnil}.$
\end{enumerate}
\begin{proof} $(1)\Rightarrow (2)$ In view of Lemma 4.2, we have
$$a=\left(
\begin{array}{cc}
t_1&t_2\\
0&a_2
\end{array}
\right)_p,b=\left(
\begin{array}{cc}
t_1&t_2\\
0&c
\end{array}
\right)_p,$$ where $t_1\in (p\mathcal{A}p)^{-1}, a_2\in [(1-p)\mathcal{A}(1-p)]^{qnil}, c\in (1-p)\mathcal{A}(1-p).$ By using Lemma 4.3,
$c\in \mathcal{A}^{\tiny\textcircled{d}}$. Let $q=cc^{\tiny\textcircled{d}}$. In light of Lemma 4.1, we have
$$c=\left(
\begin{array}{cc}
t_3&t_4\\
0&t_5
\end{array}
\right)_{q\times q},$$ where $t_3\in (q\mathcal{A}q)^{-1}, t_5\in ((1-q)\mathcal{A}(1-q))^{qnil}$.

We easily check that $pq=0=qp$. Let $e_1=p, e_2=q$ and $e_3=1-p-q$. Then $e_1+e_2+e_3=1$ and $e_ie_j=0 (i\neq j)$. Set $e=\{ e_1,e_2,e_3\}$.
Let $t_1=e_1be_1, s_1=e_1be_2, s_2=e_1be_3, t_3=e_2be_2, t_4=e_2be_3,t_5=e_3be_3.$

Claim 1. $b=\left(
\begin{array}{ccc}
t_1&s_1&s_2\\
0&t_3&t_4\\
0&0&t_5
\end{array}
\right)_{e\times e}.$ This is obvious as $t_3=e_2b_3e_2, t_4=e_2b_3e_3,t_5=e_3b_3e_3.$

Claim 2. $a=\left(
\begin{array}{ccc}
t_1&s_1&s_2\\
0&n_1&n_2\\
0&n_3&n_4
\end{array}
\right)_{e\times e}$. Clearly, we have
$$\begin{array}{rll}
e_1ae_1&=&pap=a(a^{\tiny\textcircled{d}}a)e_1=a(a^{\tiny\textcircled{d}}b)e_1=e_1be_1=t_1,\\
e_1ae_2&=&a(a^{\tiny\textcircled{d}}a)e_2=a(a^{\tiny\textcircled{d}}b)e_2=s_1,\\
e_1ae_3&=&a(a^{\tiny\textcircled{d}}a)e_3=a(a^{\tiny\textcircled{d}}b)e_3=s_2.
\end{array}$$
Moreover, we get $n_1+n_2+n_3+n_4=a_2\in \mathcal{A}^{qnil},$ as desired.

$(2)\Rightarrow (1)$ Since $e_1a^{\tiny\textcircled{d}}e_1=a^{\tiny\textcircled{d}}$, we see that
$$a^{\tiny\textcircled{d}}=\left(
\begin{array}{ccc}
a^{\tiny\textcircled{d}}&0&0\\
0&0&0\\
0&0&0
\end{array}
\right)_{e\times e}.$$ By directly computing, we have $$\begin{array}{rll}
aa^{\tiny\textcircled{d}}&=&\left(
\begin{array}{ccc}
t_1&s_1&s_2\\
0&n_1&n_2\\
0&n_3&n_4
\end{array}
\right)_{e\times e}\left(
\begin{array}{ccc}
a^{\tiny\textcircled{d}}&0&0\\
0&0&0\\
0&0&0
\end{array}
\right)_{e\times e},\\
ba^{\tiny\textcircled{d}}&=&\left(
\begin{array}{ccc}
t_1&s_1&s_2\\
0&t_3&t_4\\
0&0&t_5
\end{array}
\right)_{e\times e}\left(
\begin{array}{ccc}
a^{\tiny\textcircled{d}}&0&0\\
0&0&0\\
0&0&0
\end{array}
\right)_{e\times e}.
\end{array}$$ Thus, $$aa^{\tiny\textcircled{d}}=\left(
\begin{array}{ccc}
t_1a^{\tiny\textcircled{d}}&0&0\\
0&0&0\\
0&0&0
\end{array}
\right)_{e\times e}=ba^{\tiny\textcircled{d}}.$$ Likewise, we prove that $a^{{\tiny\textcircled{d}}}a=a^{{\tiny\textcircled{d}}}b$. Therefore
$a\leq^{{\tiny\textcircled{d}}}b$.\end{proof}

In~\cite[Theorem 2]{D3} and ~\cite[Theorem 4.2]{W}, the core-EP order of two complex matrices were investigated. As an immediate consequence of Theorem 4.4, we derive an alternative new characterization of core-EP order for complex matrices.

\begin{cor} Let $A,B\in {\Bbb C}^{n\times n}$. Then the following are equivalent:\end{cor}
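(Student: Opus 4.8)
The plan is to obtain this corollary as the matrix specialization of Theorem 4.4, in exactly the way Corollary 3.7 was deduced from Theorem 3.6. First I would take the ambient Banach *-algebra to be $\mathcal{A}=\mathbb{C}^{n\times n}$ and invoke the fact, recorded earlier in the paper, that for a complex matrix the generalized core-EP inverse $A^{\tiny\textcircled{d}}$ coincides with the ordinary core-EP inverse $A^{\tiny\textcircled{D}}$. Consequently the generalized core-EP order $A\leq^{\tiny\textcircled{d}}B$ on $\mathbb{C}^{n\times n}$ is literally the core-EP order $A\leq^{\tiny\textcircled{D}}B$, i.e. the pair of relations $AA^{\tiny\textcircled{D}}=BA^{\tiny\textcircled{D}}$ and $A^{\tiny\textcircled{D}}A=A^{\tiny\textcircled{D}}B$, so condition (1) of the corollary is precisely the matrix instance of condition (1) of Theorem 4.4.

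Next I would translate the abstract idempotent-block description of Theorem 4.4(2) into concrete matrix language. The elements $e_1=AA^{\tiny\textcircled{D}}$ and $e_2=e_2^*=e_2^2$ produced by the theorem are Hermitian idempotents in $\mathbb{C}^{n\times n}$, hence orthogonal projections onto mutually orthogonal subspaces; with $e_3=I-e_1-e_2$ the triple $\{e_1,e_2,e_3\}$ yields an orthogonal decomposition $\mathbb{C}^n=\mathrm{im}(e_1)\oplus\mathrm{im}(e_2)\oplus\mathrm{im}(e_3)$. Choosing an orthonormal basis adapted to this decomposition gives a unitary $U$ for which the Peirce entries $e_i x e_j$ become genuine rectangular blocks, so the representations of $A$ and $B$ in Theorem 4.4(2) become block upper-triangular matrices $U^{*}AU$ and $U^{*}BU$ that share their entire first block row $(t_1,s_1,s_2)$.

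It then remains only to check that the two side conditions convert correctly in finite dimensions. The invertibility $t_1\in(e_1\mathcal{A}e_1)^{-1}$ and $t_3\in(e_2\mathcal{A}e_2)^{-1}$ becomes honest invertibility of the corresponding diagonal blocks, and the quasinilpotency $n_1+n_2+n_3+n_4,\,t_5\in\mathcal{A}^{qnil}$ becomes actual nilpotency, since in $\mathbb{C}^{n\times n}$ the set $\mathcal{A}^{qnil}$ coincides with the set of nilpotent matrices (the condition $\lim_{m\to\infty}\|x^m\|^{1/m}=0$ forces spectral radius zero, and a matrix of spectral radius zero is nilpotent). I expect the only nontrivial point to be bookkeeping rather than mathematics: ensuring the single unitary change of basis is applied consistently to both $A$ and $B$ so that the shared block row is preserved, and confirming that the core-EP index appearing in the matrix statement matches the one supplied by the quasinilpotent parts of Theorem 4.4. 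Modulo this routine dictionary, the equivalence is immediate from Theorem 4.4 and no further computation is needed.
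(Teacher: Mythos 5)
Your proposal is correct and matches the paper's intent exactly: the paper states this corollary as an immediate specialization of Theorem 4.4 to $\mathcal{A}={\Bbb C}^{n\times n}$ (just as Corollary 3.7 follows from Theorem 3.6), using the coincidence of the generalized core-EP inverse with the core-EP inverse for complex matrices and the fact that quasinilpotent matrices are nilpotent. The extra details you supply about the unitary change of basis and the identification of $\mathcal{A}^{qnil}$ with the nilpotents are exactly the routine dictionary the paper leaves implicit.
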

\begin{enumerate}
\item [(1)] $A\leq^{{\tiny\textcircled{D}}}B$.
\vspace{-.5mm}
\item [(2)] There exist $E_1=AA^{\tiny\textcircled{D}}$ and $E_2=E_2^*=E_2^2$ such that
$$A=\left(
\begin{array}{ccc}
T_1&S_1&S_2\\
0&N_1&N_2\\
0&N_3&N_4
\end{array}
\right)_{E\times E}, B=\left(
\begin{array}{ccc}
T_1&S_1&S_2\\
0&T_3&T_4\\
0&0&T_5
\end{array}
\right)_{E\times E},$$
\end{enumerate} where $E=\{ E_1,E_2,I_n-E_1-E_2\}, T_1\in (E_1{\Bbb C}^{n\times n}E_1)^{-1}, T_3\in (E_2{\Bbb C}^{n\times n}E_2)^{-1}$ and
$N_1+N_2+N_3+N_4,T_5\in {\Bbb C}^{n\times n}$ are nilpotent.

\vskip10mm

\end{document}